\documentclass[a4paper,12pt]{amsart}
\pdfoutput=1
\usepackage{amsfonts, amsmath, amssymb, amstext} 
\usepackage{tikz}
\usepackage[normalem]{ulem}
\usepackage{mathrsfs}
\usepackage{hyperref}
\usepackage{enumitem}

\usepackage[
backend=bibtex,
style=numeric,sortcites,
maxbibnames=99
]{biblatex}
\addbibresource{uniquenessNTalgebra.bib}

\emergencystretch=50pt
\allowdisplaybreaks[2]

\setlength{\oddsidemargin}{0cm}
\setlength{\evensidemargin}{0cm}
\setlength{\topmargin}{0cm}
\setlength{\textwidth}{16cm}
\setlength{\textheight}{23cm}

\numberwithin{equation}{section}

\theoremstyle{plain}
\newtheorem{thm}{Theorem}[section] 
\newtheorem{lem}[thm]{Lemma}
\newtheorem{prop}[thm]{Proposition}
\newtheorem{cor}[thm]{Corollary}

\theoremstyle{definition}
\newtheorem{defn}[thm]{Definition}

\theoremstyle{remark}

\newcommand{\norm}[1]{\left\|#1\right\|}
\newcommand{\cspan}{\overline{\mathrm{span}}}
\def\C{{\mathbb C}}
\def\N{{\mathbb N}}

\begin{document}

\title[A uniqueness theorem for Nica--Toeplitz algebras]{A uniqueness theorem for the Nica--Toeplitz algebra of a compactly aligned  product system}

\author{James Fletcher}
\address{School of Mathematics and Statistics \\ Victoria University of Wellington \\ Wellington 6140, New Zealand}
\email{james.fletcher@vuw.ac.nz}

\date{\today}
\thanks{This research was supported by an Australian Government Research Training Program (RTP) Scholarship and by the Marsden grant 15-UOO-071 from the Royal Society of New Zealand.}

\subjclass[2010]{46L05 (Primary) 46L08, 46L55 (Secondary)}

\keywords{Nica--Toeplitz algebra; Product system; Hilbert bimodule}

\begin{abstract}
Fowler introduced the notion of a product system: a collection of Hilbert bimodules $\mathbf{X}=\left\{\mathbf{X}_p:p\in P\right\}$  indexed by a semigroup $P$, endowed with a multiplication implementing isomorphisms $\mathbf{X}_p\otimes_A \mathbf{X}_q\cong \mathbf{X}_{pq}$. When $P$ is quasi-lattice ordered, Fowler showed how to associate a $C^*$-algebra $\mathcal{NT}_\mathbf{X}$ to $\mathbf{X}$, generated by a universal representation satisfying some covariance condition. In this article we prove a uniqueness theorem for these so called Nica--Toeplitz algebras. 
\end{abstract}

\maketitle

\section{Introduction}

Suppose $A$ is a $C^*$-algebra and $X$ is a right Hilbert $A$-module. When $X$ comes equipped with a left action of $A$ by adjointable operators, we call $X$ a Hilbert $A$-bimodule. When this left action of $A$ is faithful, Pimsner showed how to associate a $C^*$-algebra $\mathcal{T}_X$ to $X$, called the Toeplitz algebra of $X$, generated by the raising and lowering operators on the Fock space of $X$ \cite{MR1426840}. In \cite{MR1722197}, Fowler and Raeburn generalised the situation to arbitrary left actions by adjointable operators, and showed that $\mathcal{T}_X$ may be realised as the universal $C^*$-algebra generated by so called Toeplitz representations. In the spirit of Coburn's Theorem for the classical Toeplitz algebra \cite{MR0213906}, Fowler and Raeburn also proved a uniqueness theorem for $\mathcal{T}_X$ \cite[Theorem~2.1]{MR1722197} that provides a sufficient condition for a representation of $\mathcal{T}_X$ to be faithful. Loosely speaking, their result states that a representation of $\mathcal{T}_X$ on a Hilbert space $\mathcal{H}$ will be faithful, provided the representation leaves enough room in $\mathcal{H}$ for the coefficient algebra $A$ to act faithfully. 

Subsequently Fowler introduced the notion of a product system of Hilbert bimodules \cite{MR1907896}, generalising the continuous product systems of Hilbert spaces studied by Arveson \cite{MR987590} and the discrete product systems studied by Dinh \cite{MR1138835}. Loosely speaking, a product system of Hilbert $A$-bimodules over a unital semigroup $P$ consists of a semigroup $\mathbf{X}=\bigsqcup_{p\in P} \mathbf{X}_p$, such that each $\mathbf{X}_p$ is a Hilbert $A$-bimodule, and the map $x\otimes_A y\mapsto xy$ extends to an isomorphism from $\mathbf{X}_p\otimes_A \mathbf{X}_q$ to $\mathbf{X}_{pq}$ for each $p,q\in P\setminus \{e\}$. Motivated by the work of Nica \cite{MR1241114} and Laca and Raeburn on Toeplitz algebras associated to non-abelian groups \cite{MR1402771}, Fowler studied representations of compactly aligned product systems over quasi-lattice ordered groups --- semigroups sitting inside groups possessing a semi-lattice like structure, satisfying an additional constraint called Nica covariance. Generalising the Toeplitz algebra associated to a single Hilbert bimodule, Fowler showed how to associate a $C^*$-algebra $\mathcal{NT}_\mathbf{X}$, generated by a universal Nica covariant representation, to each compactly aligned product system $\mathbf{X}$. We call this $C^*$-algebra the Nica--Toeplitz algebra of $\mathbf{X}$. Furthermore, Fowler associates a twisted semigroup crossed product algebra to each compactly aligned product system, and characterises their faithful representations \cite[Theorem~7.2]{MR1907896}. Restricting to the subalgebra $\mathcal{NT}_\mathbf{X}$ then gives a uniqueness theorem for Nica--Toeplitz algebras, generalising both Laca and Raeburn's uniqueness theorem for Toeplitz algebras of quasi-lattice ordered groups \cite[Theorem~3.7]{MR1402771} and Fowler and Raeburn's uniqueness theorem for Toeplitz algebras of Hilbert bimodules \cite[Theorem~2.1]{MR1722197} --- at least in the case where the bimodule is `essential'.  

In this article we prove a slightly more general version of Fowler's uniqueness theorem for Nica--Toeplitz algebras associated to compactly aligned product systems over quasi-lattice ordered groups. The result gives a sufficient condition for the induced representation $\psi_*$ of a Nica covariant representation $\psi$ (on a Hilbert space $\mathcal{H}$) to be faithful. This condition basically says that the ranges of all the operators $\left\{\psi_p(x): x\in \mathbf{X}_p, \ p\in P\setminus\{e\}\right\}$ should leave enough room in $\mathcal{H}$ for $A$ to act faithfully. When $A$ acts by compacts on each fibre of $\mathbf{X}$, we will see that this condition is also necessary. Unlike in Fowler's result, we do not insist that each fibre of the product system is essential (i.e. we do not require that $\mathbf{X}_p=\overline{A\cdot\mathbf{X}_p}$ for each $p\in P$). Whilst there do not seem to many `natural' examples of Hilbert bimodules with nondegenerate left actions, this level of generality was made use of in our article \cite{2017arXiv170608626F}. Moreover, in contrast to Fowler's proof, we do not view the Nica--Toeplitz algebra as a subalgebra of a twisted semigroup crossed product, working with $\mathcal{NT}_\mathbf{X}$ directly. Furthermore, we note that Fowler's result as stated in \cite[Theorem~7.2]{MR1907896} is technically false when applied to any product system over the trivial semigroup $\{e\}$, and correct this error in our result. 

The article is set out as follows. In Section~\ref{background} we fix notation and recap the necessary background material for Hilbert bimodules, product systems of Hilbert bimodules, and their associated Nica--Toeplitz algebras. In Section~\ref{proof of the uniqueness theorem} we present the proof of our uniqueness theorem for Nica-Toeplitz algebras.

After completing this article, it was brought to our attention that Kwa\'sniewski and Larsen had already proved a far-reaching generalisation of our (along with Fowler's) uniqueness theorem \cite[Corollary~10.14]{2016arXiv161108525K} for full Nica--Toeplitz algebras associated to well-aligned ideals of right tensor $C^*$-precategories. Subsequent to the initial version of this article appearing on the arXiv, Kwa\'sniewski and Larsen showed how their more general result can be applied to product systems over right LCM semigroups (themselves a generalisation of quasi-lattice ordered groups) \cite[Theorem~2.19]{2017arXiv170604951K}. Despite this, we still feel that the results in this article will find use amongst those studying product systems over quasi-lattice ordered groups. We provide a direct proof of the uniqueness theorem in the quasi-lattice ordered case, and as such avoid the various technical conditions present in \cite[Theorem~2.19]{2017arXiv170604951K}. We also note that in Corollary~\ref{representations in C*-algebras}, we show how to extend the uniqueneness theorem to representations in arbitrary $C^*$-algebras (rather than just on Hilbert spaces), provided the action on each fibre is compact. 

\section{Preliminaries}
\label{background}

\subsection{Hilbert bimodules}

We attempt to summarise only those aspects of Hilbert bimodules that we will need. Readers unfamiliar with this material, or looking for more detail, are directed to \cite{lance}.

Let $A$ be a $C^*$-algebra. A (right) inner-product $A$-module is a complex vector space $X$ equipped with a map $\langle \cdot, \cdot \rangle_A:X\times X\rightarrow A$, linear in its second argument, and a right action of $A$, such that for any $x,y\in X$ and $a\in A$, we have
\begin{enumerate}[label=\upshape(\roman*)]
\item $\langle x,y\rangle_A=\langle y,x\rangle_A^*$;
\item $\langle x,y\cdot a \rangle_A=\langle x,y\rangle_Aa$;
\item $\langle x,x\rangle_A\geq 0$ in $A$; and
\item $\langle x,x\rangle_A=0$ if and only if $x=0$.
\end{enumerate}
It follows from \cite[Proposition~1.1]{lance} that the formula $\norm{x}_X:=\norm{\langle x,x\rangle_A}_A^{1/2}$ defines a norm on $X$. If $X$ is complete with respect to this norm, we say that $X$ is a (right) Hilbert $A$-module. 

Let $X$ be a (right) Hilbert $A$-module. We say that a map $T:X\rightarrow X$ is adjointable if there exists a map $T^*:X\rightarrow X$ such that $\langle Tx, y\rangle_A=\langle x, T^*y\rangle_A$ for each $x,y\in X$. Every adjointable  operator $T$ is automatically linear and continuous, and the adjoint $T^*$ is unique. The collection of adjointable operators on $X$, denoted by $\mathcal{L}_A(X)$, equipped with the operator norm is a $C^*$-algebra. For each $x,y\in X$ there is an adjointable operator $\Theta_{x,y}\in \mathcal{L}_A(X)$ defined by $\Theta_{x,y}(z)=x\cdot \langle y, z\rangle_A$. We call operators of this form (generalised) rank-one operators. The closed subspace $\mathcal{K}_A(X):=\cspan\{\Theta_{x,y}:x,y\in X\}$ is an essential ideal of $\mathcal{L}_A(X)$, whose elements we refer to as (generalised) compact operators. 

A (right) Hilbert $B$--$A$-bimodule is a (right) Hilbert $A$-module $X$ together with a $*$-homomorphism $\phi:B\rightarrow \mathcal{L}_A(X)$. When $A=B$, we say that $X$ is a Hilbert $A$-bimodule. We think of $\phi$ as implementing a left action of $B$ on $X$, and frequently write $b\cdot x$ for $\phi(b)(x)$. Since each $\phi(b)\in \mathcal{L}_A(X)$ is $A$-linear, we have that $b\cdot (x\cdot a)=(b\cdot x)\cdot a$ for each $a\in A$, $b\in B$, and $x\in X$. 

An important example is the Hilbert $A$-bimodule ${}_A A_A$, which is just the set $A$ equipped with the inner product given by $\langle a,b\rangle_A=a^*b$ and left and right actions of $A$ given by multiplication. Then $\mathcal{K}_A({}_A A_A)$ is isomorphic to $A$ via the map $\Theta_{a,b}\mapsto ab^*$, whilst $\mathcal{L}_A({}_A A_A)$ is isomorphic to the multiplier algebra of $A$. 

The Hewitt--Cohen--Blanchard factorisation theorem \cite[Proposition~2.31]{MR1634408} says that if $x$ is an element of a Hilbert $A$-module $X$, then there exists a unique $x'\in X$ such that $x=x'\cdot \langle x', x'\rangle_A$. Hence, $X$ is a right nondegenerate $A$-module in the sense that $X=\cspan\{x\cdot a:x\in X,a\in A\}$. It is not necessarily true that every Hilbert $A$-bimodule is left nondegenerate in the sense that $X=\cspan\{a\cdot x:x\in X,a\in A\}$ (in \cite{MR1907896}, Fowler calls such bimodules  essential). 

The balanced tensor product $X\otimes_A Y$ of two Hilbert $A$-bimodules $X$ and $Y$ is formed as follows. Let $X \odot Y$ be the algebraic tensor product of $X$ and $Y$ as complex vector spaces, and let $X\odot_A Y$ be the quotient of $X \odot Y$ by the subspace spanned by elements of the form $x\cdot a \odot y- x\odot a\cdot y$ where $x\in X$, $y\in Y$, and $a\in A$ (we write $x\odot_A y$ for the coset containing $x\odot y$). Then the formula
$
\langle x\odot_A y, w\odot_A z\rangle_A:=\langle y, \langle x,w\rangle_A \cdot z\rangle_A,
$
determines a bounded $A$-valued sesquilinear form on $X\odot_A Y$. Let $N$ be the subspace $\mathrm{span}\{n\in X\odot_A Y:\langle n,n\rangle_A=0\}$. The formula  $\norm{z+N}:=\inf_{n\in N} \norm{\langle z+n,z+n\rangle_A}_A^{1/2}$ defines a norm on $(X\odot_A Y)/N$, and we define $X\otimes_A Y$ to be the completion of $(X\odot_A Y)/N$ with respect to this norm. The balanced tensor product $X\otimes_A Y$ carries a left and right action of $A$, such that $a\cdot (x\otimes_A y)\cdot b=(a\cdot x)\otimes_A (y\cdot b)$ for each $x\in X$, $y\in Y$, and $a,b\in A$. 

Given Hilbert $A$-bimodules $X$ and $Y$ and an adjointable operator $S\in \mathcal{L}_A(X)$, there exists an adjointable operator $S\otimes_A \mathrm{id}_Y\in \mathcal{L}_A(X\otimes_A Y)$ (with adjoint $S^*\otimes_A \mathrm{id}_Y$) determined by the formula $(S\otimes_A \mathrm{id}_Y)(x\otimes_A y)= (Sx)\otimes_A y$ for each $x\in X$ and $y\in Y$. 

We will also make use of the theory of induced representations. Given a Hilbert $B$--$A$-bimodule $X$ and a nondegenerate representation $\pi:A\rightarrow \mathcal{B}(\mathcal{H})$ of $A$ on a Hilbert space $\mathcal{H}$, \cite[Proposition~2.66]{MR1634408} gives a representation $X\text{-}\mathrm{Ind}_A^B\pi:B\rightarrow \mathcal{B}\left(X\otimes_A \mathcal{H}\right)$ such that $\left(X\text{-}\mathrm{Ind}_A^B\pi\right)(b)(x\otimes_A h)=(b\cdot x)\otimes_A h$ for each $b\in B$, $x\in X$, and $h\in \mathcal{H}$. 

\subsection{Product systems of Hilbert bimodules and quasi-lattice ordered groups}

Let $A$ be a $C^*$-algebra and $P$ a semigroup with identity $e$. A product system over $P$ with coefficient algebra $A$ is a semigroup $\mathbf{X}=\bigsqcup_{p\in P}\mathbf{X}_p$ such that
\begin{enumerate}[label=\upshape(\roman*)]
\item $\mathbf{X}_p\subseteq \mathbf{X}$ is a Hilbert $A$-bimodule for each $p\in P$;
\item $\mathbf{X}_e$ is equal to the Hilbert $A$-bimodule ${}_A A_A$;
\item For each $p,q\in P\setminus \{e\}$, there exists a Hilbert $A$-bimodule isomorphism $M_{p,q}:\mathbf{X}_p\otimes_A \mathbf{X}_q\rightarrow \mathbf{X}_{pq}$ satisfying $M_{p,q}(x\otimes_A y)=xy$ for each $x\in \mathbf{X}_p$ and $y\in \mathbf{X}_q$;
\item[(iv)] Multiplication in $\mathbf{X}$ by elements of $\mathbf{X}_e=A$ implements the left and right actions of $A$ on each $\mathbf{X}_p$, i.e. $xa=x\cdot a$ and $ax=a\cdot x$ for each $a\in A$, $x\in \mathbf{X}_p$, and $p\in P$. 
\end{enumerate}

For each $p\in P$, we write $\phi_p:A\rightarrow \mathcal{L}_A(\mathbf{X}_p)$ for the $*$-homomorphism that implements the left action of $A$ on $\mathbf{X}_p$, i.e. $\phi_p(a)(x)=a\cdot x=ax$ for each $a\in A$ and $x\in \mathbf{X}_p$. Since $\mathbf{X}$ is a semigroup, multiplication in $\mathbf{X}$ is associative. In particular, $\phi_{pq}(a)(xy)=(\phi_p(a)x)y$ for all $p,q\in P$, $a\in A$, $x\in \mathbf{X}_p$, and $y\in \mathbf{X}_q$. Also, for each $p\in P$, we write $\langle \cdot,\cdot\rangle_A^p$ for the $A$-valued inner-product on $\mathbf{X}_p$. 

By (ii) and (iv), for each $p\in P$ there exist $A$-linear inner-product preserving maps $M_{p,e}:\mathbf{X}_p\otimes_A \mathbf{X}_e \rightarrow \mathbf{X}_p$ and $M_{e,p}:\mathbf{X}_e\otimes_A \mathbf{X}_p\rightarrow \mathbf{X}_p$ such that $M_{p,e}(x\otimes_A a)=xa=x\cdot a$ and $M_{e,p}(a\otimes_A x)=ax=a\cdot x$ for each $a\in \mathbf{X}_e=A$ and $x\in \mathbf{X}_p$. By the Hewitt--Cohen--Blanchard factorisation theorem, each $M_{p,e}$ is automatically an $A$-bimodule isomorphism. On the other hand, the maps $M_{e,p}$ need not be isomorphisms, since we do not require that each $\mathbf{X}_p$ is (left) nondegenerate (i.e. $M_{e,p}$ need not be surjective). 

Given $p\in P\setminus \{e\}$ and $q\in P$, the $A$-bimodule isomorphism $M_{p,q}:\mathbf{X}_p\otimes_A \mathbf{X}_q\rightarrow \mathbf{X}_{pq}$ enables us to define a $*$-homomorphism $\iota_p^{pq}:\mathcal{L}_A\left(\mathbf{X}_p\right)\rightarrow \mathcal{L}_A\left(\mathbf{X}_{pq}\right)$ by 
\[
\iota_p^{pq}(S):=M_{p,q}\circ \left(S\otimes_A \mathrm{id}_{\mathbf{X}_q}\right)\circ M_{p,q}^{-1}
\]
for each $S\in \mathcal{L}_A\left(\mathbf{X}_p\right)$. Equivalently, the $*$-homomorphism $\iota_p^{pq}$ is characterised by the formula $\iota_p^{pq}(S)(xy)=(Sx)y$ for each $S\in \mathcal{L}_A\left(\mathbf{X}_p\right)$, $x\in \mathbf{X}_p$, and $y\in \mathbf{X}_q$. 
Since $\mathbf{X}_e \otimes_A \mathbf{X}_q$ need not in general be isomorphic to $\mathbf{X}_q$, we cannot always define a map from $\mathcal{L}_A\left(\mathbf{X}_e\right)$ to $\mathcal{L}_A\left(\mathbf{X}_q\right)$ using the above procedure. However, as $\mathcal{K}_A\left(\mathbf{X}_e\right)=\mathcal{K}_A\left({}_A A_A\right)\cong A$, we can define $\iota_e^q:\mathcal{K}_A\left(\mathbf{X}_e\right)\rightarrow \mathcal{L}_A\left(\mathbf{X}_q\right)$ by $\iota_e^q(a):=\phi_q(a)$. For notational purposes, we define $\iota_p^r: \mathcal{L}_A\left(\mathbf{X}_p\right)\rightarrow \mathcal{L}_A\left(\mathbf{X}_r\right)$ to be the zero map whenever $p,r\in P$ and $r\neq pq$ for all $q\in P$.

We are primarily interested in situations where the underlying semigroup possesses some additional order structure. In particular we focus on the quasi-lattice ordered groups introduced by Nica \cite{MR1241114}. A quasi-lattice ordered group $(G,P)$ consists of a group $G$ and a subsemigroup $P$ of $G$ such that $P\cap P^{-1}=\{e\}$, and with respect to the partial order on $G$ induced by $p\leq q \Leftrightarrow p^{-1}q\in P$, any two elements $p,q\in G$ which have a common upper bound in $P$ have a least common upper bound in $P$. It is straightforward to show that if two elements in $G$ have a least common upper bound in $P$, then this least common upper bound is unique. We write $p\vee q$ for the least common upper bound of $p,q\in G$ if it exists. For $p,q\in G$, we write $p\vee q=\infty$ if $p$ and $q$ have no common upper bound in $P$, and $p\vee q<\infty$ otherwise. We can also extend the notion of least upper bounds in $(G,P)$ from pairs of elements in $P$ to finite subsets of $P$. We define $\bigvee\emptyset:=e$, $\bigvee \{p\}:=p$ for any $p\in P$, and for any $n \geq 2$ and $C:=\{p_1,\ldots p_n\}\subseteq P$ we define $\bigvee C:=p_1\vee \cdots \vee p_n$ (since $P\cap P^{-1}=\{e\}$, the relation $\leq$ is antisymmetric, and so this is well-defined). 

Let $(G,P)$ be a quasi-lattice ordered group and $\mathbf{X}$ a product system over $P$. We say that $\mathbf{X}$ is compactly aligned if, whenever $S\in \mathcal{K}_A(\mathbf{X}_p)$ and $T\in \mathcal{K}_A(\mathbf{X}_p)$ for some $p,q\in P$ with $p\vee q<\infty$, we have $\iota_p^{p\vee q}(S)\iota_q^{p\vee q}(T)\in \mathcal{K}_A(\mathbf{X}_{p\vee q})$. Note that this condition does not imply that either $\iota_p^{p\vee q}(S)$ or $\iota_q^{p\vee q}(T)$ is compact. 

\subsection{Representations of compactly aligned product systems, Nica covariance, and the Nica--Toeplitz algebra}

Let $(G,P)$ be a quasi-lattice ordered group and $\mathbf{X}$ a compactly aligned product system over $P$ with coefficient algebra $A$. A representation of $\mathbf{X}$ in a $C^*$-algebra $B$ is a map $\psi:\mathbf{X}\rightarrow B$ such that:
\begin{enumerate}
\item[(T1)] each $\psi_p:=\psi|_{\mathbf{X}_p}$ is a linear map, and $\psi_e$ is a $C^*$-homomorphism;
\item[(T2)] $\psi_p(x)\psi_q(y)=\psi_{pq}(xy)$ for all $p,q\in P$ and $x\in \mathbf{X}_p$, $y\in \mathbf{X}_q$;
\item[(T3)] $\psi_p(x)^*\psi_p(y)=\psi_e(\langle x,y\rangle_A^p)$ for all $p\in P$ and $x,y\in \mathbf{X}_p$.
\end{enumerate}

It follows from (T1) and (T3) that a representation $\psi$ is always norm-decreasing, and isometric if and only $\psi_e$ is injective. Proposition~8.11 of \cite{MR2135030} shows that for each $p\in P$, there exists a $*$-homomorphism $\psi^{(p)}:\mathcal{K}_A\left(\mathbf{X}_p\right)\rightarrow B$ such that $\psi^{(p)}\left(\Theta_{x,y}\right)=\psi_p(x)\psi_p(y)^*$ for all $x,y\in \mathbf{X}_p$. 

We say that a representation $\psi:\mathbf{X}\rightarrow B$ is Nica covariant if, for any $p,q\in P$ and $S\in \mathcal{K}_A(\mathbf{X}_p)$, $T\in \mathcal{K}_A(\mathbf{X}_q)$, we have
\begin{align*}
\psi^{(p)}(S)\psi^{(q)}(T)=
\begin{cases}
\psi^{(p\vee q)}\left(\iota_p^{p\vee q}(S)\iota_q^{p\vee q}(T)\right) & \text{if $p\vee q<\infty$}\\
0 & \text{otherwise.}
\end{cases}
\end{align*}
It follows from an application of the Hewitt--Cohen--Blanchard factorisation theorem that for any $p,q\in P$, we have
\[
\psi_p(\mathbf{X}_p)^*\psi_q(\mathbf{X}_q)\in 
\begin{cases}
\cspan\{\psi_{p^{-1}(p\vee q)}(\mathbf{X}_{p^{-1}(p\vee q)})\psi_{q^{-1}(p\vee q)}(\mathbf{X}_{q^{-1}(p\vee q)})^*\} & \text{if $p\vee q<\infty$}\\
\{0\} & \text{if $p\vee q=\infty$.}
\end{cases}
\]

Associated to each product system there exists a canonical Nica covariant representation called the Fock representation. We let $\mathcal{F}_\mathbf{X}:=\bigoplus_{p\in P} \mathbf{X}_p$ denote the space of sequences $(x_p)_{p\in P}$ such that $x_p\in \mathbf{X}_p$ for each $p\in P$ and $\sum_{p\in P}\langle x_p,x_p\rangle_A^p$ converges in $A$. By \cite[Proposition~1.1]{lance} there exists a well defined $A$-valued inner product on $\mathcal{F}_\mathbf{X}$ such that $\left\langle (x_p)_{p\in P}, (y_p)_{p\in P}\right\rangle_A=\sum_{p\in P}\langle x_p, y_p\rangle_A^p$, and that $\mathcal{F}_\mathbf{X}$ is complete with respect to the induced norm. Letting $A$ act pointwise from the left and right gives $\mathcal{F}_\mathbf{X}$ the structure of a Hilbert $A$-bimodule, which we call the Fock space of $\mathbf{X}$. Lemma~5.3 of \cite{MR1907896} shows that there exists an isometric Nica covariant representation $l:\mathbf{X}\rightarrow \mathcal{L}_A(\mathcal{F}_\mathbf{X})$ such that $l_p(x)(y_q)_{q\in P}=\left(xy_q\right)_{q\in P}$ for each $p\in P$, $x\in \mathbf{X}_p$, and $(y_q)_{q\in P}\in \mathcal{F}_\mathbf{X}$. We call $l$ the Fock representation of $\mathbf{X}$. 

Using \cite[Theorem~2.10]{MR2679392} it can be shown that there exists a $C^*$-algebra $\mathcal{NT}_\mathbf{X}$, which we call the Nica--Toeplitz algebra of $\mathbf{X}$, and a Nica covariant representation $i_\mathbf{X}:\mathbf{X}\rightarrow \mathcal{NT}_\mathbf{X}$, that are universal in the following sense:
\begin{enumerate}[label=\upshape(\roman*)]
\item $\mathcal{NT}_\mathbf{X}$ is generated by the image of $i_\mathbf{X}$;
\item if $\psi:\mathbf{X}\rightarrow B$ is any other Nica covariant representation of $\mathbf{X}$, then there exists a $*$-homomorphism $\psi_*:\mathcal{NT}_\mathbf{X}\rightarrow B$ such that $\psi_* \circ i_\mathbf{X}=\psi$. 
\end{enumerate}
Since $i_\mathbf{X}$ generates $\mathcal{NT}_\mathbf{X}$, it follows that  $\mathcal{NT}_\mathbf{X}=\cspan\left\{i_\mathbf{X}(x)i_\mathbf{X}(y)^*:x,y\in \mathbf{X}\right\}$.

Proposition~4.7 of \cite{MR1907896} shows that there exists a coaction $\delta_\mathbf{X}:\mathcal{NT}_\mathbf{X}\rightarrow \mathcal{NT}_\mathbf{X}\otimes C^*(G)$ (we use an unadorned $\otimes$ to denote the minimal tensor product of $C^*$-algebras), which we call the canonical gauge coaction, such that $\delta_\mathbf{X}(i_{\mathbf{X}_p}(x))=i_{\mathbf{X}_p}(x)\otimes i_G(p)$ for each $p\in P$ and $x\in \mathbf{X}_p$. For those readers interested in learning more about coactions in general, we suggest \cite[Appendix~A]{MR2203930}. 

Lemma~1.3 of \cite{MR1375586} shows that there exists a conditional expectation $E_{\delta_\mathbf{X}}$ of $\mathcal{NT}_\mathbf{X}$ onto the generalised fixed-point algebra $\mathcal{NT}_\mathbf{X}^{\delta_\mathbf{X}}:=\{b\in \mathcal{NT}_\mathbf{X}:\delta_\mathbf{X}(b)=b\otimes i_G(e)\}$ defined by $E_{\delta_\mathbf{X}}:=(\mathrm{id}_{ \mathcal{NT}_\mathbf{X}}\otimes \rho)\circ \delta_\mathbf{X}$, where $\rho:C^*(G)\rightarrow \C$ is the canonical trace. It can be shown that $\mathcal{NT}_\mathbf{X}^{\delta_\mathbf{X}}=\cspan\{i_{\mathbf{X}_p}(\mathbf{X}_p)i_{\mathbf{X}_p}(\mathbf{X}_p)^*:p\in P\}$, and, for any $p,q\in P$, $x\in \mathbf{X}_p$, $y\in \mathbf{X}_q$, we have $E_{\delta_\mathbf{X}}\left(i_{\mathbf{X}_p}(x)i_{\mathbf{X}_q}(y)^*\right)=\delta_{p,q}i_{\mathbf{X}_p}(x)i_{\mathbf{X}_q}(y)^*$. We are particularly interested in the situation where the expectation $E_{\delta_\mathbf{X}}$ is faithful on positive elements, i.e. $E_{\delta_\mathbf{X}}(b^*b)=0 \Rightarrow b=0$ for any $b\in \mathcal{NT}_\mathbf{X}$. Inspired by \cite[Definition~7.1]{MR1907896}, we say that a compactly aligned product system $\mathbf{X}$ is amenable if $E_{\delta_\mathbf{X}}$ is faithful on positive elements. The argument of \cite[Lemma~6.5]{MR1402771} shows that if $G$ is an amenable group, then $\mathbf{X}$ is an amenable product system. 

\section{A uniqueness theorem for Nica--Toeplitz algebras}
\label{proof of the uniqueness theorem}

Firstly, we fix some notation. 

\begin{defn}
\label{basic projections}
Let $(G,P)$ be a quasi-lattice ordered group, $\mathbf{X}$ a compactly aligned product system over $P$ with coefficient algebra $A$, and $\psi:\mathbf{X}\rightarrow \mathcal{B}(\mathcal{H})$ a Nica covariant representation of $\mathbf{X}$ on a Hilbert space $\mathcal{H}$. We define a collection $\{P_p^\psi:p\in P\}$ of projections in $\mathcal{B}(\mathcal{H})$ by $P_e^\psi:=\mathrm{id}_\mathcal{H}$ and $P_p^\psi:=\mathrm{proj}_{\overline{\psi_p(\mathbf{X}_p)\mathcal{H}}}$ for each $p\in P\setminus\{e\}$. We also set $P_\infty^\psi:=0$.
\end{defn}

The purpose of this article is to prove the following result:

\begin{thm}
\label{uniqueness theorem for NT algebras}
Let $(G,P)$ be a quasi-lattice ordered group and $\mathbf{X}$ a compactly aligned product system over $P$ with coefficient algebra $A$. Suppose $\psi:\mathbf{X}\rightarrow \mathcal{B}(\mathcal{H})$ is a Nica covariant representation of $\mathbf{X}$ on a Hilbert space $\mathcal{H}$. 
\begin{enumerate}[label=\upshape(\roman*)]
\item
If the product system $\mathbf{X}$ is amenable, and, for any finite set $K\subseteq P\setminus \{e\}$, the representation
\begin{align*}
A\ni a \mapsto \psi_e(a)\prod_{t\in K}\big(\mathrm{id}_\mathcal{H}-P_t^\psi\big)\in \mathcal{B}(\mathcal{H})
\end{align*}
is faithful, then the induced $*$-homomorphism $\psi_*:\mathcal{NT}_\mathbf{X}\rightarrow \mathcal{B}(\mathcal{H})$ is faithful.
\item
If $\psi_*$ is faithful and $\phi_p(A)\subseteq \mathcal{K}_A(\mathbf{X}_p)$ for each $p\in P$, then the representation 
\begin{align*}
A\ni a \mapsto \psi_e(a)\prod_{t\in K}\big(\mathrm{id}_\mathcal{H}-P_t^\psi\big)\in \mathcal{B}(\mathcal{H})
\end{align*}
is faithful for any finite set $K\subseteq P\setminus \{e\}$.
\end{enumerate}
\end{thm}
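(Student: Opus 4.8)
The plan is to handle (i) by a spatial induction that pushes everything down to the $e$-fibre and then upgrades via amenability, and (ii) by an inclusion--exclusion identity that realises the relevant corner as the image of a single element of $\mathcal{NT}_\mathbf X$. First I would record the order structure of the projections $P_p^\psi$. The factorisation $x=x'\cdot\langle x',x'\rangle_A^p$ gives $\psi_p(x)=\psi_p(x')\psi^{(p)}(\Theta_{x',x'})$, so $\overline{\psi_p(\mathbf X_p)\mathcal H}=\overline{\psi^{(p)}(\mathcal K_A(\mathbf X_p))\mathcal H}$ and hence $\psi^{(p)}(S)P_p^\psi=\psi^{(p)}(S)=P_p^\psi\psi^{(p)}(S)$. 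Nica covariance then yields the semilattice relation $P_p^\psi P_q^\psi=P_{p\vee q}^\psi$ (with $P_\infty^\psi=0$); in particular the $P_p^\psi$ commute and are monotone ($p\le q\Rightarrow P_q^\psi\le P_p^\psi$), and since $\overline{\psi_p(\mathbf X_p)\mathcal H}$ is invariant under both $\psi_e(a)$ and $\psi_e(a)^*$, every $\psi_e(a)$ commutes with every $P_p^\psi$. I would also note the computational identities $\psi^{(p)}(S)\psi_r(\xi)=\psi_r(\iota_p^r(S)\xi)$ for $p\le r$ and $\psi_r(x)^*\psi^{(r)}(S)\psi_r(y)=\psi_e(\langle x,Sy\rangle_A^r)$, which transport products down to the $e$-fibre.

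For (i) the heart of the matter is faithfulness of $\psi_*$ on $\mathcal{NT}_\mathbf X^{\delta_\mathbf X}$. I would take a finite, $\vee$-closed $F\subseteq P$ and $b=\sum_{p\in F}i_\mathbf X^{(p)}(k_p)$ with $\psi_*(b)=\sum_{p\in F}\psi^{(p)}(k_p)=0$, and argue $b=0$ by induction on $|F|$. Right-multiplying $\psi_*(b)$ by $\prod_{t\in F\setminus\{e\}}(\mathrm{id}_\mathcal H-P_t^\psi)$ kills every term with $p\ne e$ (since $\psi^{(p)}(k_p)(\mathrm{id}_\mathcal H-P_p^\psi)=0$) and, as $\psi_e(A)$ commutes with the projections, leaves $\psi_e(k_e)\prod_{t\in F\setminus\{e\}}(\mathrm{id}_\mathcal H-P_t^\psi)$; the hypothesis with $K=F\setminus\{e\}$ forces $k_e=0$. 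For a minimal $r\in F\setminus\{e\}$ I would then isolate the $r$-fibre by compressing with $P_r^\psi$ together with further factors $(\mathrm{id}_\mathcal H-P_{r\vee p}^\psi)$ for $p\in F$ not below $r$, transport the surviving term to the $e$-fibre via $\psi_r(x)^*(\,\cdot\,)\psi_r(y)$, apply the hypothesis, and induct on $F\setminus\{r\}$. The fiddly part is arranging these compressions, via the semilattice relation, so that exactly one fibre survives at each stage.

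To pass from the core to all of $\mathcal{NT}_\mathbf X$ I would use the standard expectation argument: if there is a conditional expectation $\Phi$ on $\overline{\psi_*(\mathcal{NT}_\mathbf X)}$ with $\Phi\circ\psi_*=\psi_*\circ E_{\delta_\mathbf X}$, then for $c^*c\in\ker\psi_*$ one gets $\psi_*(E_{\delta_\mathbf X}(c^*c))=\Phi(\psi_*(c^*c))=0$, whence $E_{\delta_\mathbf X}(c^*c)=0$ by faithfulness on the core and $c=0$ by amenability. Producing $\Phi$ is the main obstacle: because $\psi$ carries no a priori gauge symmetry, the canonical gauge coaction need not descend to $\overline{\psi_*(\mathcal{NT}_\mathbf X)}$. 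I would instead obtain it by comparison with the Fock representation $l$, which does satisfy the corner hypothesis (its vacuum fibre $\mathbf X_e\subseteq\mathcal F_\mathbf X$ lies in the range of every $\prod_{t\in K}(\mathrm{id}_\mathcal H-P_t^l)$, where $A$ acts faithfully) and is therefore faithful once $\mathbf X$ is amenable, using the semilattice $P_p^\psi P_q^\psi=P_{p\vee q}^\psi$ to disentangle the overlapping subspaces $\psi_p(\mathbf X_p)\mathcal H$ and control the off-diagonal terms; the passage from the finite corners of the hypothesis to a full Fock dilation is where amenability and the projection structure do the real work.

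For (ii) the compactness assumption $\phi_p(A)\subseteq\mathcal K_A(\mathbf X_p)$ turns the corner into a genuine element of $\mathcal{NT}_\mathbf X$. Using the identity $\psi_e(a)P_r^\psi=\psi^{(r)}(\phi_r(a))$ (valid once $\phi_r(a)$ is compact), together with $\prod_{t\in S}P_t^\psi=P_{\bigvee S}^\psi$ and inclusion--exclusion, I obtain
\[
\psi_e(a)\prod_{t\in K}\big(\mathrm{id}_\mathcal H-P_t^\psi\big)=\psi_*\Big(\sum_{S\subseteq K,\ \bigvee S<\infty}(-1)^{|S|}\,i_\mathbf X^{(\bigvee S)}\big(\phi_{\bigvee S}(a)\big)\Big).
\]
If the left-hand side vanishes, faithfulness of $\psi_*$ forces the bracketed element to be $0$ in $\mathcal{NT}_\mathbf X$; applying the faithful Fock representation and reading off the vacuum fibre $\mathbf X_e$ then gives $a=0$. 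The only points needing care are the identity $\psi_e(a)P_r^\psi=\psi^{(r)}(\phi_r(a))$ and the semilattice product, both of which follow from the first paragraph.
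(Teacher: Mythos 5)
Your overall architecture is right, and your part (ii) is essentially the paper's argument verbatim: the element $T_a=\sum_{S\subseteq K,\,\bigvee S<\infty}(-1)^{|S|}i_\mathbf{X}^{(\bigvee S)}(\phi_{\bigvee S}(a))$, the identity $\varphi_*(T_a)=\varphi_e(a)\prod_{t\in K}(\mathrm{id}-P_t^\varphi)$ via inclusion--exclusion and $P_p^\psi P_q^\psi=P_{p\vee q}^\psi$, and evaluation in a Fock representation induced from a faithful representation of $A$ to read off $a$ on the vacuum fibre. That part is fine (you do not even need the Fock representation to be ``faithful'', only that $T_a=0$ forces $a=0$ there).

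Part (i) has a genuine gap at exactly the point you flag as ``the main obstacle''. The decisive step is the existence of a conditional expectation $E_\psi$ on $\psi_*(\mathcal{NT}_\mathbf{X})$ with $E_\psi\circ\psi_*=\psi_*\circ E_{\delta_\mathbf{X}}$, which amounts to the norm estimate $\bigl\|\sum_{k:p_k=q_k}\psi_{p_k}(x_k)\psi_{p_k}(y_k)^*\bigr\|\le\bigl\|\sum_k\psi_{p_k}(x_k)\psi_{q_k}(y_k)^*\bigr\|$. Your proposal does not prove this; it gestures at ``comparison with the Fock representation'' and ``controlling the off-diagonal terms'', and the comparison as stated is circular: you invoke faithfulness of (the integrated form of) the Fock representation, which is an instance of the theorem you are proving, and in any case knowing the estimate for $l$ does not transfer it to $\psi$ --- a priori $\|\psi_*(z)\|$ could be strictly smaller than $\|l_*(z)\|$ off the diagonal, which is precisely what must be ruled out. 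The paper does this by constructing, for each initial segment $C$ of $F$, a refined projection $R_{C,F}^\psi=Q_{C,F}^\psi\prod_{s\ne t}\bigl(P_{\bigvee C}^\psi-P_{\beta_{s,t}}^\psi\bigr)$ that annihilates every off-diagonal term $\psi_p(x)\psi_q(y)^*$ with $p\ne q$ while still carrying a faithful copy of $\mathcal{L}_A(\mathbf{X}_{\bigvee C})$ (this is where the hypothesis for \emph{all} finite $K$ is used, via $\rho_{\bigvee C}^\psi$ restricted to $\overline{\psi_{\bigvee C}(\mathbf{X}_{\bigvee C})\mathcal{K}}$). Nothing in your sketch substitutes for this construction, and it is the real content of the theorem.

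Two further points. First, your induction for faithfulness on the core is a legitimate alternative (Laca--Raeburn style) to the paper's norm formula $\|Z\|=\max_C\|Q_{C,F}^\psi\rho_{\bigvee C}^\psi(\sum_k\iota_{p_k}^{\bigvee C}(\Theta_{x_k,y_k}))\|$, and the individual compressions you describe do work; but injectivity on finite sums $\sum_{p\in F}i_\mathbf{X}^{(p)}(k_p)$ does not by itself give faithfulness on the closed span $\mathcal{NT}_\mathbf{X}^{\delta_\mathbf{X}}$ --- you need to know that each $B_F=\sum_{p\in F}i_\mathbf{X}^{(p)}(\mathcal{K}_A(\mathbf{X}_p))$ (for $\vee$-closed $F$) is a $C^*$-subalgebra, so that injectivity there is automatically isometric and passes to the inductive limit. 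This is standard but must be said. Second, in your transport step $\psi_r(x)^*(\,\cdot\,)\psi_r(y)$ you should note that the surviving product of projections conjugates to $\prod(\mathrm{id}_\mathcal{H}-P_{r^{-1}(r\vee p)}^\psi)$ with all indices $\ne e$, which is what lets you apply the hypothesis; this is the content of Lemma~\ref{projections and psi}(i) and is fine, but it is the same mechanism you would need, in much more elaborate form, for the missing expectation estimate.
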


The main step in the proof of the uniqueness theorem is to show that the expectation $E_{\delta_\mathbf{X}}$ is also implemented spatially, i.e. there is a compatible expectation $E_\psi$ of $\psi_*(\mathcal{NT}_\mathbf{X})$ onto $\psi_*(\mathcal{NT}_\mathbf{X}^{\delta_\mathbf{X}})$. To get this compatible expectation we need to be able to calculate the norms of elements of $\psi_*(\mathcal{NT}_\mathbf{X}^{\delta_\mathbf{X}})$. To do this we will make use of the following well-known fact about operators on Hilbert spaces: if $P_1,\ldots, P_n\in \mathcal{B}(\mathcal{H})$ are mutually orthogonal projections that commute with $T\in \mathcal{B}(\mathcal{H})$ and satisfy $\sum_{i=1}^n P_i=\mathrm{id}_\mathcal{H}$, then $\norm{T}_{\mathcal{B}(\mathcal{H})}=\max_{1\leq i\leq n}\norm{P_i T}_{\mathcal{B}(\mathcal{H})}$. 

We now work towards showing that there exists a collection of mutually orthogonal projections in $\mathcal{B}(\mathcal{H})$ that decompose the identity and commute with everything in $\psi_*(\mathcal{NT}_\mathbf{X}^{\delta_\mathbf{X}})$. 

We begin by showing that the $*$-homomorphism $\psi^{(p)}:\mathcal{K}_A(\mathbf{X}_p)\rightarrow \mathcal{B}(\mathcal{H})$ has a canonical extension to all of $\mathcal{L}_A(\mathbf{X}_p)$ (for each $p\in P$), and establish some properties of this extension. 

\begin{prop}
\label{existence of rho map}
Let $(G,P)$ be a quasi-lattice ordered group and $\mathbf{X}$ a compactly aligned product system over $P$ with coefficient algebra $A$. Let $\psi:\mathbf{X}\rightarrow \mathcal{B}(\mathcal{H})$ be a Nica covariant representation of $\mathbf{X}$ on a Hilbert space $\mathcal{H}$. Then
\begin{enumerate}[label=\upshape(\roman*)]
\item 
For each $p\in P$, there exists a representation $\rho_p^\psi:\mathcal{L}_A\left(\mathbf{X}_p\right)\rightarrow \mathcal{B}(\mathcal{H})$ such that for each $S\in \mathcal{L}_A(\mathbf{X}_p)$,
\[
\rho_p^\psi(S)\left(\psi_p(x)h\right)=\psi_p(Sx)h \quad \text{for each $x\in \mathbf{X}_p$, $h\in \mathcal{H}$}
\]
and $\rho_p^\psi(S)$ is zero on $\left(\psi_p\left(\mathbf{X}_p\right)\mathcal{H}\right)^\perp$.
\item 
$\rho_p^\psi|_{\mathcal{K}_A\left(\mathbf{X}_p\right)}=\psi^{(p)}$.
\item
For any $q\in P$ and $a\in A\cong \mathcal{K}_A(\mathbf{X}_e)$, we have 
$\rho_{q}^\psi\left(\iota_e^{q}(a)\right)=\rho_{e}^\psi(a)P_{q}^\psi$.
Furthermore, if  $p\in P\setminus \{e\}$, then
$\rho_{pq}^\psi\left(\iota_p^{pq}(S)\right)=\rho_{p}^\psi(S)P_{pq}^\psi$
for any and $S\in \mathcal{L}_A(\mathbf{X}_p)$. 
\item
If $\mathcal{K}\subseteq \mathcal{H}$ is a $\psi_e$-invariant subspace of $\mathcal{H}$, then the subspace $\mathcal{M}:=\overline{\psi_p\left(\mathbf{X}_p\right)\mathcal{K}}$ is $\rho_p^\psi$-invariant. Furthermore, if $\psi_e|_\mathcal{K}$ is faithful, then $\rho_p^\psi|_\mathcal{M}$ is also faithful. 
\end{enumerate}
\begin{proof}
Observe that for any $p\in P$ and $x,y\in \mathbf{X}_p$ and $h,k\in \mathcal{H}$, we have
\begin{equation}
\label{existence of U}
\begin{aligned}
\langle \psi_p(x)h,\psi_p(y)k\rangle_\C
\hspace{-0.1em}=\hspace{-0.1em}
\langle h, \psi_p(x)^*\psi_p(y)k\rangle_\C
\hspace{-0.1em}=\hspace{-0.1em}
\langle h, \psi_e\hspace{-0.1em}\left(\hspace{-0.1em}\langle x, y \rangle_A\hspace{-0.1em}\right)k\rangle_\C
&\hspace{-0.1em}=\hspace{-0.1em}
\langle x\hspace{-0.1em}\otimes_A \hspace{-0.1em}h, y\hspace{-0.1em}\otimes_A\hspace{-0.1em} k\rangle_\C. 
\end{aligned}
\end{equation}
Thus, there exists a linear isometry $U:\mathbf{X}_p \otimes_A \mathcal{H}\rightarrow \mathcal{H}$ such that 
$U\left(x\otimes_A h\right)=\psi_p(x)h$
for each $x\in \mathbf{X}_p$ and $h\in \mathcal{H}$. Equation~\eqref{existence of U} shows that $U^*\left(\psi_p(x)h\right)=x\otimes_A h$ for each $x\in \mathbf{X}_p$ and $h\in \mathcal{H}$. We claim that $U^*|_{\left(\psi_p\left(\mathbf{X}_p\right)\mathcal{H}\right)^\perp}=0$. To see this, observe that for any $f\in \left(\psi_p\left(\mathbf{X}_p\right)\mathcal{H}\right)^\perp$ and $y\in \mathbf{X}_p$, $h\in \mathcal{H}$ we have
\begin{align*}
\langle U(y\otimes_A k),f\rangle_\C=\langle \psi_p(y)k, f\rangle_\C=0,
\end{align*}
and hence $U^*(f)=0$. Since 
$
\mathbf{X}_p\otimes_A \mathcal{H}=\left(\mathbf{X}_p\cdot A\right)\otimes_A \mathcal{H}=\mathbf{X}_p\otimes_A \overline{\psi_e(A)\mathcal{H}},
$
we may assume that the representation $\psi_e$ is nondegenerate without loss of generality. With this in mind, define $\rho_p^\psi:\mathcal{L}_A(\mathbf{X}_p)\rightarrow \mathcal{B}(\mathcal{H})$ by
\[
\rho_p^\psi(S):=U\circ \left(\mathbf{X}_p\text{-}\mathrm{Ind}_A^{\mathcal{L}_A\left(\mathbf{X}_p\right)}\psi_e(S)\right)\circ U^*
\] 
for each $S\in \mathcal{L}_A\left(\mathbf{X}_p\right)$. Thus, for each $S\in \mathcal{L}_A\left(\mathbf{X}_p\right)$, the restriction $\rho_p^\psi(S)|_{\left(\psi_p\left(\mathbf{X}_p\right)\mathcal{H}\right)^\perp}$ is zero, whilst for any $x\in \mathbf{X}_p$ and $h\in \mathcal{H}$ we have 
\begin{align*}
\rho_p^\psi(S)(\psi_p(x)h)
&=\left(U\circ \left(\mathbf{X}_p\text{-}\mathrm{Ind}_A^{\mathcal{L}_A\left(\mathbf{X}_p\right)}\psi_e(S)\right)\circ U^*\right)\left(\psi_p(x)h\right)\\
&=\left(U\circ\left(\mathbf{X}_p\text{-}\mathrm{Ind}_A^{\mathcal{L}_A\left(\mathbf{X}_p\right)}\psi_e(S)\right)\right)(x\otimes_A h)
=U\left(Sx\otimes_A h\right)
=\psi_p(Sx)h. 
\end{align*}
This completes the proof of part (i).

Since both $\psi^{(p)}$ and $\rho_p^\psi$ are $*$-homomorphisms, to prove (ii) it suffices to show that $\psi^{(p)}$ and $\rho_p^\psi$ agree on rank-one operators. Fix $x,y \in \mathbf{X}_p$.  Firstly, we check that $\psi^{(p)}\left(\Theta_{x,y}\right)$ and $\rho_p^\psi\left(\Theta_{x,y}\right)$ agree on $\overline{\psi_p\left(\mathbf{X}_p\right)\mathcal{H}}$. For any $z\in \mathbf{X}_p$ and $h\in \mathcal{H}$, we have
\begin{align*}
\rho_p^\psi\left(\Theta_{x,y}\right)\left(\psi_p(z)h\right)
&=\psi_p\left(\Theta_{x,y}(z)\right)h
=\psi_p\left(x\cdot \langle y,z \rangle_A\right)h\\
&=\psi_p(x)\psi_p(y)^*\psi_p(z)h
=\psi^{(p)}\left(\Theta_{x,y}\right)\left(\psi_p(z)h\right).
\end{align*}
Since both $\psi^{(p)}\left(\Theta_{x,y}\right)$ and $\rho_p^\psi\left(\Theta_{x,y}\right)$ are linear and continuous, we conclude that they agree on $\overline{\psi_p\left(\mathbf{X}_p\right)\mathcal{H}}$. 
It remains to check that $\psi^{(p)}\left(\Theta_{x,y}\right)$ and $\rho_p^\psi\left(\Theta_{x,y}\right)$ agree on the orthogonal complement $\left(\psi_p\left(\mathbf{X}_p\right)\mathcal{H}\right)^\perp$. Making use of part (i), we see that the restriction $\rho_p^\psi\left(\Theta_{x,y}\right)|_{\left(\psi_p\left(\mathbf{X}_p\right)\mathcal{H}\right)^\perp}=0$. Since 
\[
\big\langle \psi^{(p)}\left(\Theta_{x,y}\right)h,k\big\rangle_\C=\langle \psi_p(x)\psi_p(y)^*h,k\rangle_\C=\langle h, \psi_p(y)\psi_p(x)^*k\rangle_\C=0
\]
for any $h\in \left(\psi_p\left(\mathbf{X}_p\right)\mathcal{H}\right)^\perp$ and $k\in \mathcal{H}$, we conclude that $\psi^{(p)}(\Theta_{x,y})|_{\left(\psi_p\left(\mathbf{X}_p\right)\mathcal{H}\right)^\perp}=0$ as well. This completes the proof of part (ii).

We now prove part (iii). Let $q\in P$ and $a\in A$. If $q=e$, then $P_q^\psi=\mathrm{id}_\mathcal{H}$, and so
\[
\rho_e^\psi(a)P_q=\rho_e^\psi(a)=\rho_e^\psi\left(\iota_e^e(a)\right)=\rho_e^\psi\left(\iota_e^q(a)\right). 
\]
On the other hand, if $q\neq e$, then $P_q^\psi=\mathrm{proj}_{\overline{\psi_q\left(\mathbf{X}_q\right)\mathcal{H}}}$. Hence, both $\rho_e^\psi(a)P_q$ and $\rho_e^\psi(\iota_e^q(a))$ are zero on $\left(\psi_q\left(\mathbf{X}_q\right)\mathcal{H}\right)^\perp$. Since $\rho_q^\psi\left(\iota_e^q(a)\right)$ and $\rho_e^\psi(a)P_q^\psi$ are linear and continuous, whilst 
\begin{align*}
\rho_q^\psi\left(\iota_e^q(a)\right)\left(\psi_q(x)h\right)
&=\psi_q\left(\iota_e^q(a)x\right)h
=\psi_q\left(a\cdot x\right)h\\
&=\rho_e^\psi(a)\left(\psi_q(x)h\right)
=\rho_e^\psi(a)P_q^\psi\left(\psi_q(x)h\right)
\end{align*}
for any $x\in \mathbf{X}_q$ and $h\in \mathcal{H}$, we see that $\rho_q^\psi\left(\iota_e^q(a)\right)$ and $\rho_e^\psi(a)P_q^\psi$ also agree on $ \overline{\psi_q\left(\mathbf{X}_q\right)\mathcal{H}}$. Thus, $\rho_q^\psi\left(\iota_e^q(a)\right)=\rho_e^\psi(a)P_q^\psi$. 

Now fix $p\in P\setminus \{e\}$ and $S\in \mathcal{L}_A\left(\mathbf{X}_p\right)$. Since $pq\neq e$, both $\rho_{pq}^\psi\left(\iota_p^{pq}(S)\right)$ and $\rho_{p}^\psi(S)P_{pq}^\psi$ are zero on the orthogonal complement $\left(\psi_{pq}\left(\mathbf{X}_{pq}\right)\mathcal{H}\right)^\perp$. Observe that for any $x\in \mathbf{X}_p$, $y\in \mathbf{X}_q$, and $h\in \mathcal{H}$, we have
\begin{align*}
\rho_{pq}^\psi\left(\iota_p^{pq}(S)\right)\left(\psi_{pq}(xy)h\right)
=\psi_{pq}\left(\iota_p^{pq}(S)(xy)\right)h
=\psi_{pq}\left((Sx)y\right)h
=\psi_p(Sx)\psi_q(y)h,
\end{align*}
which by part (i) is the same as
\begin{align*}
\rho_p^\psi(S)\left(\psi_p(x)\psi_q(y)h\right)
=\rho_p^\psi(S)\left(\psi_{pq}(xy)h\right)
=\rho_p^\psi(S)P_{pq}^\psi\left(\psi_{pq}(xy)h\right).
\end{align*}
Since $\overline{\psi_{pq}\left(\mathbf{X}_{pq}\right)\mathcal{H}}=\overline{\psi_p\left(\mathbf{X}_p\right)\psi_q\left(\mathbf{X}_q\right)\mathcal{H}}$, whilst $\rho_{pq}^\psi\left(\iota_p^{pq}(S)\right)$ and $\rho_p^\psi(S)P_{pq}^\psi$ are linear and continuous, we conclude that $\rho_{pq}^\psi\left(\iota_p^{pq}(S)\right)$ and $\rho_p^\psi(S)P_{pq}^\psi$ are also equal on $\overline{\psi_{pq}\left(\mathbf{X}_{pq}\right)\mathcal{H}}$. 

Finally, we prove part (iv). Firstly, observe that the subspace $\mathcal{M}$ is $\rho_p^\psi$-invariant, since $\rho_p^\psi(S)\left(\psi_p(x)k\right)=\psi_p(Sx)k\in \mathcal{M}$ for any $S\in \mathcal{L}_A\left(\mathbf{X}_p\right)$, $x\in \mathbf{X}_p$, and $k\in \mathcal{K}$. Now suppose that $\psi_e|_\mathcal{K}$ is faithful. Since $\mathcal{L}_A\left(\mathbf{X}_p\right)$ acts faithfully on $\mathbf{X}_p$,  the induced representation $\mathbf{X}_p\text{-}\mathrm{Ind}_A^{\mathcal{L}_A\left(\mathbf{X}_p\right)}\left(\psi_e|_\mathcal{K}\right):\mathcal{L}_A\left(\mathbf{X}_p\right)\rightarrow \mathcal{B}\left(\mathbf{X}_p\otimes_A \mathcal{K}\right)$ is faithful by \cite[Corollary~2.74]{MR1634408}. Since $U$ implements a unitary equivalence between $\mathbf{X}_p\text{-}\mathrm{Ind}_A^{\mathcal{L}_A\left(\mathbf{X}_p\right)}\left(\psi_e|_\mathcal{K}\right)$ and $\rho_p^\psi|_\mathcal{M}$, and unitary equivalence preserves the faithfulness of representations, we conclude that $\rho_p^\psi|_\mathcal{M}$ is faithful. 
\end{proof}
\end{prop}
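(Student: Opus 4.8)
The plan is to build everything from a single isometry $U \colon \mathbf{X}_p \otimes_A \mathcal{H} \to \mathcal{H}$ encoding the representation $\psi_p$, and then to \emph{define} the extension $\rho_p^\psi$ by conjugating an induced representation through $U$. First I would verify the inner-product computation
\begin{equation*}
\langle \psi_p(x)h, \psi_p(y)k\rangle_\C = \langle x \otimes_A h, y \otimes_A k\rangle_\C,
\end{equation*}
which follows immediately from (T3) together with the definition of the inner product on the balanced tensor product $\mathbf{X}_p \otimes_A \mathcal{H}$. This identity says precisely that the assignment $x \otimes_A h \mapsto \psi_p(x)h$ is isometric on elementary tensors, so it extends to a well-defined linear isometry $U$ on all of $\mathbf{X}_p \otimes_A \mathcal{H}$. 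The same identity lets me read off the adjoint: $U^*(\psi_p(x)h) = x \otimes_A h$, and a short orthogonality argument shows $U^*$ kills $(\psi_p(\mathbf{X}_p)\mathcal{H})^\perp$. One technical point I would dispose of early is that $\psi_e$ may be degenerate; since $\mathbf{X}_p = \overline{\mathbf{X}_p \cdot A}$ by Hewitt--Cohen--Blanchard, we have $\mathbf{X}_p \otimes_A \mathcal{H} = \mathbf{X}_p \otimes_A \overline{\psi_e(A)\mathcal{H}}$, so I may harmlessly assume $\psi_e$ nondegenerate and invoke the induced-representation machinery of \cite[Proposition~2.66]{MR1634408}. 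Setting $\rho_p^\psi(S) := U \circ \big(\mathbf{X}_p\text{-}\mathrm{Ind}\,\psi_e(S)\big)\circ U^*$ then gives a $*$-homomorphism, and tracing the defining formula $(\mathbf{X}_p\text{-}\mathrm{Ind}\,\psi_e(S))(x\otimes_A h) = (Sx)\otimes_A h$ through $U$ yields $\rho_p^\psi(S)(\psi_p(x)h) = \psi_p(Sx)h$ and vanishing on $(\psi_p(\mathbf{X}_p)\mathcal{H})^\perp$, which is part~(i).

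For parts (ii) and (iii) the recurring strategy is the same: both sides are bounded linear operators, they agree on the dense subspace spanned by vectors $\psi_p(x)h$ by a direct application of the formula in (i), and they both vanish on the orthogonal complement, so they coincide everywhere. For (ii) I would just compute $\rho_p^\psi(\Theta_{x,y})(\psi_p(z)h) = \psi_p(x \cdot \langle y,z\rangle_A)h = \psi_p(x)\psi_p(y)^*\psi_p(z)h$, matching $\psi^{(p)}(\Theta_{x,y})$; the complement vanishes for both since $\psi_p(y)^*$ annihilates $(\psi_p(\mathbf{X}_p)\mathcal{H})^\perp$. Part (iii) is the most bookkeeping-heavy: I split into the cases $q=e$ (where $P_q^\psi = \mathrm{id}_\mathcal{H}$ trivially), and $q\neq e$, using the characterising identity $\iota_p^{pq}(S)(xy) = (Sx)y$ for the compact-aligned structure, and crucially the fact that $\overline{\psi_{pq}(\mathbf{X}_{pq})\mathcal{H}} = \overline{\psi_p(\mathbf{X}_p)\psi_q(\mathbf{X}_q)\mathcal{H}}$ so that the projection $P_{pq}^\psi$ lines up with the range on which the factorised action lives.

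Part (iv) is where the real content sits. Invariance of $\mathcal{M} = \overline{\psi_p(\mathbf{X}_p)\mathcal{K}}$ is immediate from $\rho_p^\psi(S)(\psi_p(x)k) = \psi_p(Sx)k$. The faithfulness claim is the step I expect to be the main obstacle, and the key idea is to import it rather than prove it by hand: since the left action of $\mathcal{L}_A(\mathbf{X}_p)$ on $\mathbf{X}_p$ is faithful and $\psi_e|_\mathcal{K}$ is assumed faithful, \cite[Corollary~2.74]{MR1634408} guarantees that the induced representation $\mathbf{X}_p\text{-}\mathrm{Ind}(\psi_e|_\mathcal{K})$ on $\mathbf{X}_p \otimes_A \mathcal{K}$ is faithful. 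The restriction of $U$ to $\mathbf{X}_p \otimes_A \mathcal{K}$ is a surjective isometry onto $\mathcal{M}$, hence implements a unitary equivalence between this induced representation and $\rho_p^\psi|_\mathcal{M}$; since faithfulness is preserved under unitary equivalence, $\rho_p^\psi|_\mathcal{M}$ is faithful. The subtlety to handle carefully is checking that $U$ genuinely restricts to a \emph{unitary} onto $\mathcal{M}$ (not merely an isometry into $\mathcal{H}$), which is exactly the reason for tracking the range $\overline{\psi_p(\mathbf{X}_p)\mathcal{K}}$ throughout, and for having identified $U^*$ explicitly in part~(i).
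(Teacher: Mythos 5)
Your proposal is correct and follows essentially the same route as the paper's own proof: the isometry $U$ built from the inner-product identity, the definition $\rho_p^\psi(S)=U\circ\big(\mathbf{X}_p\text{-}\mathrm{Ind}\,\psi_e(S)\big)\circ U^*$ after reducing to nondegenerate $\psi_e$, the dense-subspace-plus-complement argument for (ii) and (iii), and the appeal to \cite[Corollary~2.74]{MR1634408} together with the unitary equivalence implemented by $U$ for (iv). No gaps to report.
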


We now show what a product of projections from the collection $\{P_p^\psi:p\in P\}$ looks like. 

\begin{prop}
Let $(G,P)$ be a quasi-lattice ordered group and $\mathbf{X}$ a compactly aligned product system over $P$ with coefficient algebra $A$. Let $\psi:\mathbf{X}\rightarrow \mathcal{B}(\mathcal{H})$ be a Nica covariant representation of $\mathbf{X}$ on a Hilbert space $\mathcal{H}$. Then for each $p,q\in P$, we have
\[
P_p^\psi P_q^\psi=P_{p\vee q}^\psi.
\]
In particular, the projections $\{P_p^\psi:p\in P\}$ commute.
\begin{proof}
Firstly, observe that part (i) of Proposition~\ref{existence of rho map} implies that $P_p^\psi =\rho_p^\psi\left(\mathrm{id}_{\mathbf{X}_p}\right)$ for any $p\in P\setminus \{e\}$. 

Next, we show that if $p\in P$ and $(e_i)_{i\in I}$ is the canonical approximate identity for the $C^*$-algebra $\mathcal{K}_A(\mathbf{X}_p)$, then
\begin{enumerate}[label=\upshape(\roman*)]
\item 
$\lim_{i\in I}(e_i x)=x$ for each $x\in \mathbf{X}_p$;
\item
$\lim_{i\in I}\big(\rho_p^\psi(e_i)\big)=\rho_p^\psi\left(\mathrm{id}_{\mathbf{X}_p}\right)$ (converging in the strong operator topology). 
\end{enumerate}
To see (i), fix $x\in \mathbf{X}_p$ and $\varepsilon>0$. 
Choose $x'\in \mathbf{X}_p$ so that $x=x'\cdot \langle x',x'\rangle_A$ by the Hewitt--Cohen--Blanchard factorisation theorem. Choose $i\in I$ such that for all $j\geq i$,
\[
\norm{e_j \Theta_{x',x'}-\Theta_{x',x'}}_{\mathcal{K}_A(\mathbf{X}_p)}<\frac{\varepsilon}{\norm{x'}_{\mathbf{X}_p}+1}.
\]
Thus, for all $j\geq i$, we have
\begin{align*}
\norm{e_j x-x}_{\mathbf{X}_p}
&=\norm{e_j x'\cdot \langle x',x'\rangle_A -x'\cdot \langle x',x'\rangle_A}_{\mathbf{X}_p}\\
&=\norm{\left(e_j \Theta_{x',x'}-\Theta_{x',x'}\right)x'}_{\mathbf{X}_p}
\leq \norm{e_j \Theta_{x',x'}-\Theta_{x',x'}}_{\mathcal{K}_A(\mathbf{X}_p)}\norm{x'}_{\mathbf{X}_p}
<\varepsilon.
\end{align*} 
Since $\varepsilon>0$ was arbitrary, we conclude that $\lim_{i\in I}(e_i x)=x$ for each $x\in \mathbf{X}_p$. 

We now move on to proving (ii). Fix $h\in \mathcal{H}$ and $\varepsilon>0$. If $h\in \left(\psi_p(\mathbf{X}_p)\mathcal{H}\right)^\perp$, then
\[
\rho_p^\psi(e_i)h=0=\rho_p^\psi\left(\mathrm{id}_{\mathbf{X}_p}\right)h
\]
for each $i\in I$. Thus, $\lim_{i\in I}\big\|\rho_p^\psi(e_i)h-\rho_p^\psi\left(\mathrm{id}_{\mathbf{X}_p}\right)h\big\|_{\mathcal{H}}=0$. On the other hand, if $h\in \overline{\psi_p(\mathbf{X}_p)\mathcal{H}}$, then we can choose $x_1,\ldots, x_n\in \mathbf{X}_p$ and $h_1,\ldots, h_n\in \mathcal{H}$ such that 
\[
\bigg\|h-\sum_{i=1}^n \psi_p(x_i)h_i\bigg\|_\mathcal{H}<\varepsilon/4.
\] 
Since $\norm{e_i}_{\mathcal{L}_A(\mathbf{X}_p)}\leq 1$ for each $i\in I$ and $\rho_p^\psi$ is norm-decreasing, we see that
\begin{align*}
\bigg\|\rho_p^\psi\left(e_i-\mathrm{id}_{\mathbf{X}_p}\right)\bigg(h-\sum_{i=1}^n \psi_p(x_i)h_i\bigg)\bigg\|_{\mathcal{H}}
&\leq \norm{\rho_p^\psi\left(e_i-\mathrm{id}_{\mathbf{X}_p}\right)}_{\mathcal{B}(\mathcal{H})}\bigg\|h-\sum_{i=1}^n \psi_p(x_i)h_i\bigg\|_{\mathcal{H}}\\
&\leq \norm{e_i-\mathrm{id}_{\mathbf{X}_p}}_{\mathcal{L}_A(\mathbf{X}_p)}\bigg\|h-\sum_{i=1}^n \psi_p(x_i)h_i\bigg\|_{\mathcal{H}}\\
&\leq 2\bigg\|h-\sum_{i=1}^n \psi_p(x_i)h_i\bigg\|_{\mathcal{H}}\\
&<\varepsilon/2.
\end{align*}
By (i), for each $1\leq i \leq n$, we can choose $j_i\in I$ such that whenever $k\geq j_i$,
\[
\norm{e_k x_i-x_i}_{\mathbf{X}_p}<\frac{\varepsilon}{2n\left(\max_{1\leq i\leq n}\norm{h_i}_\mathcal{H}+1\right)}.
\]
As $I$ is directed, we can choose $m\in I$ such that $m\geq j_i$ for each $1\leq i \leq n$. Since $\psi_p$ is norm-decreasing, we see that for any $k\geq m$, 
\begingroup
\allowdisplaybreaks
\begin{align*}
\bigg\|\rho_p^\psi\left(e_k-\mathrm{id}_{\mathbf{X}_p}\right)\bigg(\sum_{i=1}^n \psi_p(x_i)h_i\bigg)\bigg\|_{\mathcal{H}}
&=\bigg\|\sum_{i=1}^n \psi_p\left(\left(e_k-\mathrm{id}_{\mathbf{X}_p}\right)x\right)h_i\bigg\|_{\mathcal{H}}\\
&\leq \sum_{i=1}^n\norm{ \psi_p\left(\left(e_k-\mathrm{id}_{\mathbf{X}_p}\right)x\right)}_{\mathcal{B}(\mathcal{H})}\norm{h_i}_{\mathcal{H}}\\
&\leq \sum_{i=1}^n\norm{\left(e_k-\mathrm{id}_{\mathbf{X}_p}\right)x}_{\mathbf{X}_p}\norm{h_i}_{\mathcal{H}}\\
&< \sum_{i=1}^n \frac{\varepsilon\norm{h_i}_{\mathcal{H}}}{2n\left(\max_{1\leq i\leq n}\norm{h_i}_\mathcal{H}+1\right)}\\
&\leq \varepsilon/2.
\end{align*}
\endgroup
Thus, for each $k\geq m$, 
\begin{align*}
\big\|\rho_p^\psi&(e_k)h-\rho_p^\psi\left(\mathrm{id}_{\mathbf{X}_p}\right)h\big\|_{\mathcal{H}}\\
&=\big\|\rho_p^\psi\left(e_k-\mathrm{id}_{\mathbf{X}_p}\right)h\big\|_{\mathcal{H}}\\
&\leq \bigg\|\rho_p^\psi\left(e_k-\mathrm{id}_{\mathbf{X}_p}\right)\bigg(h-\sum_{i=1}^n \psi_p(x_i)h_i\bigg)\bigg\|_{\mathcal{H}}+
\bigg\|\rho_p^\psi\left(e_k-\mathrm{id}_{\mathbf{X}_p}\right)\bigg(\sum_{i=1}^n \psi_p(x_i)h_i\bigg)\bigg\|_{\mathcal{H}}\\
&<\varepsilon. 
\end{align*}
Since $\varepsilon>0$ was arbitrary, we conclude that $\lim_{i\in I}\big\|\rho_p^\psi(e_i)h-\rho_p^\psi\left(\mathrm{id}_{\mathbf{X}_p}\right)h\big\|_{\mathcal{H}}=0$ for each $h\in \mathcal{H}$. Thus, $\lim_{i\in I} \rho_p^\psi(e_i)=\rho_p^\psi\left(\mathrm{id}_{\mathbf{X}_p}\right)$ in the strong operator topology. 

Finally, we are ready to prove that $P_p^\psi P_q^\psi=P_{p\vee q}^\psi$ for every $p,q\in P$. Since $P_e^\psi=\mathrm{id}_\mathcal{H}$, the result is trivial when $p=e$ or $q=e$. Thus, we may as well suppose that $p,q\neq e$. Let $(e_i)_{i\in I}$ and $(f_j)_{j\in J}$ be the canonical approximate identities for $\mathcal{K}_A(\mathbf{X}_p)$ and $\mathcal{K}_A(\mathbf{X}_q)$ respectively. Then for any $i\in I$ and $j\in J$, Proposition~\ref{existence of rho map} and the Nica covariance of $\psi$ tell us that
\begin{align*}
\rho_p^\psi(e_i)\rho_q^\psi(f_j)
=\psi^{(p)}(e_i)\psi^{(q)}(f_j)
&=\begin{cases}
\psi^{(p\vee q)}\left(\iota_p^{p\vee q}(e_i)\iota_q^{p\vee q}(f_j)\right) & \text{if $p\vee q<\infty$}\\
0 & \text{otherwise}
\end{cases}\\
&=\begin{cases}
\rho_{p\vee q}^\psi\left(\iota_p^{p\vee q}(e_i)\iota_q^{p\vee q}(f_j)\right) & \text{if $p\vee q<\infty$}\\
0 & \text{otherwise}
\end{cases}\\
&=\begin{cases}
\rho_{p\vee q}^\psi\left(\iota_p^{p\vee q}(e_i)\right)\rho_{p\vee q}^\psi\left(\iota_q^{p\vee q}(f_j)\right) & \text{if $p\vee q<\infty$}\\
0 & \text{otherwise}
\end{cases}\\
&=\begin{cases}
\rho_{p}^\psi\left(e_i\right)P_{p\vee q}^\psi\rho_{q}^\psi\left(f_j\right)P_{p\vee q}^\psi & \text{if $p\vee q<\infty$}\\
0 & \text{otherwise.}
\end{cases}
\end{align*}
Hence, by (ii), we have 
\begin{align*}
P_p^\psi P_q^\psi
=\rho_p^\psi\left(\mathrm{id}_{\mathbf{X}_p}\right)\rho_q^\psi\left(\mathrm{id}_{\mathbf{X}_q}\right)
&=\begin{cases}
\rho_{p}^\psi\left(\mathrm{id}_{\mathbf{X}_p}\right)P_{p\vee q}^\psi\rho_{q}^\psi\left(\mathrm{id}_{\mathbf{X}_q}\right)P_{p\vee q}^\psi & \text{if $p\vee q<\infty$}\\
0 & \text{otherwise}
\end{cases}\\
&=\begin{cases}
\rho_{p\vee q}^\psi\left(\iota_p^{p\vee q}\left(\mathrm{id}_{\mathbf{X}_p}\right)\right)\rho_{p\vee q}^\psi\left(\iota_q^{p\vee q}\left(\mathrm{id}_{\mathbf{X}_q}\right)\right) & \text{if $p\vee q<\infty$}\\
0 & \text{otherwise}
\end{cases}\\
&=\begin{cases}
\rho_{p\vee q}^\psi\left(\iota_p^{p\vee q}\left(\mathrm{id}_{\mathbf{X}_p}\right)\iota_q^{p\vee q}\left(\mathrm{id}_{\mathbf{X}_q}\right)\right) & \text{if $p\vee q<\infty$}\\
0 & \text{otherwise}
\end{cases}\\
&=\begin{cases}
\rho_{p\vee q}^\psi\left(\mathrm{id}_{\mathbf{X}_{p\vee q}}\right) & \text{if $p\vee q<\infty$}\\
0 & \text{otherwise}
\end{cases}\\
&=\begin{cases}
P_{p\vee q}^\psi & \text{if $p\vee q<\infty$}\\
0 & \text{otherwise.}
\end{cases}
\end{align*}
Thus, $P_p^\psi P_q^\psi=P_{p\vee q}^\psi$ for each $p,q\in P$.
\end{proof}
\end{prop}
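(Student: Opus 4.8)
The plan is to realise every projection $P_p^\psi$ (with $p\neq e$) as the image of an identity operator under the extended homomorphism $\rho_p^\psi$ from Proposition~\ref{existence of rho map}, and then to transport the Nica covariance relation --- which is phrased in terms of the compact operators $\psi^{(p)}=\rho_p^\psi|_{\mathcal{K}_A(\mathbf{X}_p)}$ --- up to the (generally non-compact) identity operators $\mathrm{id}_{\mathbf{X}_p}$ by a strong-operator limiting argument.

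First I would dispose of the degenerate cases. Since $P_e^\psi=\mathrm{id}_\mathcal{H}$ and $e\vee q=q$ for every $q\in P$, the identity $P_p^\psi P_q^\psi=P_{p\vee q}^\psi$ is immediate whenever $p=e$ or $q=e$, so I may assume $p,q\neq e$. For such $p$, part~(i) of Proposition~\ref{existence of rho map} shows that $\rho_p^\psi(\mathrm{id}_{\mathbf{X}_p})$ acts as the identity on $\overline{\psi_p(\mathbf{X}_p)\mathcal{H}}$ and vanishes on its orthogonal complement, whence $P_p^\psi=\rho_p^\psi(\mathrm{id}_{\mathbf{X}_p})$.

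The technical heart is to approximate $\mathrm{id}_{\mathbf{X}_p}$ by the canonical approximate identity $(e_i)_{i\in I}$ of $\mathcal{K}_A(\mathbf{X}_p)$. Using the Hewitt--Cohen--Blanchard factorisation to write any $x\in\mathbf{X}_p$ as $x=x'\cdot\langle x',x'\rangle_A$, one checks that $e_i x\to x$ in $\mathbf{X}_p$, and hence --- after an $\varepsilon/4$ density argument on $\overline{\psi_p(\mathbf{X}_p)\mathcal{H}}$ combined with the uniform bound $\norm{e_i}\leq 1$ --- that $\rho_p^\psi(e_i)\to\rho_p^\psi(\mathrm{id}_{\mathbf{X}_p})=P_p^\psi$ in the strong operator topology. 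I would record the analogous statement for an approximate identity $(f_j)_{j\in J}$ of $\mathcal{K}_A(\mathbf{X}_q)$.

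Finally I would combine these with covariance. Because $e_i,f_j$ are compact, $\rho_p^\psi(e_i)=\psi^{(p)}(e_i)$ and $\rho_q^\psi(f_j)=\psi^{(q)}(f_j)$ by part~(ii), so Nica covariance rewrites $\rho_p^\psi(e_i)\rho_q^\psi(f_j)$ as $\rho_{p\vee q}^\psi(\iota_p^{p\vee q}(e_i)\iota_q^{p\vee q}(f_j))$ when $p\vee q<\infty$ and as $0$ otherwise; part~(iii) then converts this into $\rho_p^\psi(e_i)P_{p\vee q}^\psi\rho_q^\psi(f_j)P_{p\vee q}^\psi$. Passing to the strong-operator limit over the product directed set $I\times J$ --- legitimate because every net in sight is norm-bounded, so multiplication is jointly strongly continuous on it --- yields, in the finite case, $P_p^\psi P_q^\psi=\rho_{p\vee q}^\psi(\iota_p^{p\vee q}(\mathrm{id}_{\mathbf{X}_p})\iota_q^{p\vee q}(\mathrm{id}_{\mathbf{X}_q}))$. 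Since $\iota_p^{p\vee q}(\mathrm{id}_{\mathbf{X}_p})=\iota_q^{p\vee q}(\mathrm{id}_{\mathbf{X}_q})=\mathrm{id}_{\mathbf{X}_{p\vee q}}$ (both $\iota$-maps send an identity to an identity), this collapses to $\rho_{p\vee q}^\psi(\mathrm{id}_{\mathbf{X}_{p\vee q}})=P_{p\vee q}^\psi$, while in the case $p\vee q=\infty$ the product is $0=P_\infty^\psi$. I expect the main obstacle to be the strong-operator convergence $\rho_p^\psi(e_i)\to P_p^\psi$ and the justified interchange of the two limits, since $\rho_p^\psi(\mathrm{id}_{\mathbf{X}_p})$ is not a norm-limit of the $\rho_p^\psi(e_i)$; once these are in place, everything else is bookkeeping with the identities from Proposition~\ref{existence of rho map} and Nica covariance.
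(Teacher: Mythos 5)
Your proposal is correct and follows essentially the same route as the paper: identify $P_p^\psi=\rho_p^\psi(\mathrm{id}_{\mathbf{X}_p})$, prove $\rho_p^\psi(e_i)\to\rho_p^\psi(\mathrm{id}_{\mathbf{X}_p})$ strongly for the canonical approximate identity, apply Nica covariance together with parts (ii) and (iii) of Proposition~\ref{existence of rho map} at the level of compacts, and pass to the strong-operator limit (the paper takes the limit in the factored form $\rho_p^\psi(e_i)P_{p\vee q}^\psi\rho_q^\psi(f_j)P_{p\vee q}^\psi$, which amounts to the same bounded-net continuity argument you invoke). No gaps.
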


Our aim is to use the projections defined in Definition~\ref{basic projections} to construct a collection of mutually orthogonal projections in $\mathcal{B}(\mathcal{H})$ that decompose the identity and commute with everything in $\psi_*(\mathcal{NT}_\mathbf{X}^{\delta_\mathbf{X}})$. 

\begin{defn}
Let $(G,P)$ be a quasi-lattice ordered group and $\mathbf{X}$ a compactly aligned product system over $P$ with coefficient algebra $A$. Let $\psi:\mathbf{X}\rightarrow \mathcal{B}(\mathcal{H})$ be a Nica covariant representation of $\mathbf{X}$ on a Hilbert space $\mathcal{H}$. Let $F$ be a finite subset of $P$. For each $C\subseteq F$, define
\[
Q_{C,F}^\psi:=
P_{\bigvee C}^\psi\prod_{p\in F\setminus C}\left(\mathrm{id}_\mathcal{H}-P_p^\psi\right),
\]
where, by convention, the product over the empty set is $\mathrm{id}_\mathcal{H}$. 
\end{defn}

Whilst we have defined the projections $Q_{C,F}^\psi$, for every subset $C$ of $F$, we are particularly interested in the projections corresponding to so called initial segments of $F$. 

\begin{defn}
Let $(G,P)$ be a quasi-lattice ordered group. Let $F\subseteq P$ be a finite set. A subset $C\subseteq F$ is said to be an initial segment of $F$ if $\bigvee C<\infty$ and 
$C=\left\{t\in F:t\leq \bigvee C\right\}$.
\end{defn}

The next result shows how the projections $\{Q_{C,F}^\psi: \text{$C$ is an initial segment of $F$}\}$ and $\{P_p^\psi:p\in P\}$ interact with the operators $\{\psi_p(x):p\in P, \, x\in \mathbf{X}_p\}$. 

\begin{lem}
\label{projections and psi}
Let $(G,P)$ be a quasi-lattice ordered group and $\mathbf{X}$ a compactly aligned product system over $P$ with coefficient algebra $A$. Let $\psi:\mathbf{X}\rightarrow \mathcal{B}(\mathcal{H})$ be a Nica covariant representation of $\mathbf{X}$ on a Hilbert space $\mathcal{H}$. 
\begin{enumerate}[label=\upshape(\roman*)]
\item
Let $p,q\in P$ and $x\in \mathbf{X}_p$. Then
\[
P_q^\psi \psi_p(x)=
\begin{cases}
\psi_p(x)P_{p^{-1}(p\vee q)}^\psi & \text{if $p\vee q<\infty$}\\
0 &\text{otherwise.}
\end{cases}
\]
\item
If $F\subseteq P$ is finite and $p\in F$, then
\[
Q_{C,F}^\psi \psi_p(x)=
\begin{cases}
Q_{C,F}^\psi \psi_p(x)P_{p^{-1}\left(\bigvee C\right)}^\psi & \text{if $p\leq\bigvee C$}\\
0 &\text{otherwise}
\end{cases}
\]
for any initial segment $C$ of $F$. 
\end{enumerate}
\begin{proof}
Fix $p,q\in P$ and $x\in \mathbf{X}_p$. If $p\vee q=\infty$, then $\psi_q(y)^*\psi_p(x)=0$ for any $y\in \mathbf{X}_q$. Hence, for any $h,g\in \mathcal{H}$, it follows that
\begin{align*}
\langle \psi_q(y)h, \psi_p(x)g\rangle_\C =\langle h, \psi_q(y)^*\psi_p(x)g \rangle_\C=0.
\end{align*}
Thus, $\psi_p(\mathbf{X}_p)\mathcal{H}\subseteq \left(\psi_q(\mathbf{X}_q)\mathcal{H}\right)^\perp$, and so $P_q^\psi \psi_p(x)=0$.  

Now suppose that $p\vee q<\infty$. If $p=p\vee q$, then $p\geq q$, and so 
\[
P_q^\psi\psi_p(x)=\psi_p(x)=\psi_p(x)P_e^\psi=\psi_p(x)P_{p^{-1}(p\vee q)}^\psi.
\]
If $p\neq p\vee q$, then for any $y\in \mathbf{X}_{p^{-1}(p \vee q)}$ and $h\in \mathcal{H}$, we have
\begin{align*}
\psi_p(x)P_{p^{-1}(p \vee q)}^\psi\psi_{p^{-1}(p\vee q)}(y)h
&=\psi_p(x)\psi_{p^{-1}(p\vee q)}(y)h\\
&=\psi_{p\vee q}(xy)h
=P_q^\psi\psi_{p\vee q}(xy)h
=P_q^\psi\psi_p(x)\psi_{p^{-1}(p\vee q)}(y)h.
\end{align*}
Consequently, $\psi_p(x)P_{p^{-1}(p \vee q)}^\psi$ and $P_q^\psi\psi_p(x)$ agree on $\overline{\psi_{p^{-1}(p\vee q)}(\mathbf{X}_{p^{-1}(p\vee q)})\mathcal{H}}$. It remains to check that they agree on the orthogonal complement $\left(\psi_{p^{-1}(p\vee q)}(\mathbf{X}_{p^{-1}(p\vee q)})\mathcal{H}\right)^\perp$. Let $f\in \left(\psi_{p^{-1}(p\vee q)}(\mathbf{X}_{p^{-1}(p\vee q)})\mathcal{H}\right)^\perp$. We need to show that $P_q^\psi\psi_p(x)f=0$. It suffices to show that $\psi_p(x)f\in \left(\psi_q(\mathbf{X}_q)\mathcal{H}\right)^\perp$: for any $y\in \mathbf{X}_q$ and $h\in \mathcal{H}$, we have
\begin{align*}
\langle \psi_p(x)f,\psi_q(y)h \rangle_\C
&=\langle f, \psi_p(x)^*\psi_q(y)h \rangle_\C\\
&\in \left\langle f, \cspan\{\psi_{p^{-1}(p\vee q)}(\mathbf{X}_{p^{-1}(p\vee q)})\psi_{q^{-1}(p\vee q)}(\mathbf{X}_{q^{-1}(p\vee q)})^*\}\mathcal{H}\right\rangle_\C\\
&\subseteq \left\langle f, \psi_{p^{-1}(p\vee q)}(\mathbf{X}_{p^{-1}(p\vee q)})\mathcal{H}\right\rangle_\C\\
&=\{0\}.
\end{align*}
This completes the proof of (i).

We now prove part (ii). Fix a finite set $F\subseteq P$ with $p\in F$. Let $C$ be an initial segment of $F$. If $p\leq \bigvee C$, then $p \vee \left(\bigvee C\right)=\bigvee C<\infty$. By part (i) it follows that
\[
Q_{C,F}^\psi \psi_p(x)=Q_{C,F}^\psi P_{\bigvee C}^\psi \psi_p(x)=Q_{C,F}^\psi \psi_p(x)P_{p^{-1}\left(p\vee \left(\bigvee C\right)\right)}^\psi=Q_{C,F}^\psi \psi_p(x)P_{p^{-1} \left(\bigvee C\right)}^\psi.
\]
On the other hand, suppose that $p\not \leq \bigvee C$. Since $p\in F$, this implies that $C\neq F$. Moreover, since $C$ is an initial segment of $F$, this forces $p\in F\setminus C$. Therefore,
\[
Q_{C,F}^\psi \psi_p(x)=Q_{C,F}^\psi \big(\mathrm{id}_\mathcal{H}-P_p^\psi\big) \psi_p(x)=0. 
\]
This completes the proof of part (ii). 
\end{proof}
\end{lem}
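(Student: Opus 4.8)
The plan is to prove part (i) first by a case analysis on whether $p\vee q<\infty$, and then to obtain part (ii) as a short combinatorial consequence, leaning throughout on the fact, established in the preceding proposition, that $P_p^\psi P_q^\psi=P_{p\vee q}^\psi$ and hence that the projections $\{P_p^\psi:p\in P\}$ mutually commute. The only genuinely analytic input will be the displayed consequence of Nica covariance recorded in the preliminaries, describing $\psi_p(\mathbf{X}_p)^*\psi_q(\mathbf{X}_q)$.

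For part (i) with $p\vee q=\infty$, I would invoke that same consequence in the form $\psi_q(\mathbf{X}_q)^*\psi_p(\mathbf{X}_p)=\{0\}$, so that $\langle \psi_q(y)h,\psi_p(x)g\rangle_\C=\langle h,\psi_q(y)^*\psi_p(x)g\rangle_\C=0$ shows the range of each $\psi_p(x)$ lies in $\left(\psi_q(\mathbf{X}_q)\mathcal{H}\right)^\perp$, whence $P_q^\psi\psi_p(x)=0$. When $p\vee q<\infty$, I would first dispose of the case $p=p\vee q$ (that is, $q\leq p$): here $p^{-1}(p\vee q)=e$ so $P_{p^{-1}(p\vee q)}^\psi=\mathrm{id}_\mathcal{H}$, and $P_p^\psi P_q^\psi=P_p^\psi$ forces $P_q^\psi$ to act as the identity on $\mathrm{range}(P_p^\psi)\supseteq \psi_p(\mathbf{X}_p)\mathcal{H}$, giving $P_q^\psi\psi_p(x)=\psi_p(x)=\psi_p(x)P_{p^{-1}(p\vee q)}^\psi$. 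The substantive case $p\neq p\vee q$ I would settle by checking that $P_q^\psi\psi_p(x)$ and $\psi_p(x)P_{p^{-1}(p\vee q)}^\psi$ agree on $\overline{\psi_{p^{-1}(p\vee q)}(\mathbf{X}_{p^{-1}(p\vee q)})\mathcal{H}}$ and on its complement separately. On the former I would factor $\psi_{p\vee q}(xy)=\psi_p(x)\psi_{p^{-1}(p\vee q)}(y)$ by (T2) and use that $P_q^\psi$ fixes $\psi_{p\vee q}(\mathbf{X}_{p\vee q})\mathcal{H}\subseteq \psi_q(\mathbf{X}_q)\mathcal{H}$; on the latter both sides vanish, the right-hand one because $P_{p^{-1}(p\vee q)}^\psi$ annihilates $f$, and the left-hand one by computing $\langle\psi_p(x)f,\psi_q(y)h\rangle_\C=\langle f,\psi_p(x)^*\psi_q(y)h\rangle_\C$ and feeding in $\psi_p(x)^*\psi_q(y)\in\cspan\{\psi_{p^{-1}(p\vee q)}(\mathbf{X}_{p^{-1}(p\vee q)})\psi_{q^{-1}(p\vee q)}(\mathbf{X}_{q^{-1}(p\vee q)})^*\}$, which places $\psi_p(x)^*\psi_q(y)h$ in the space to which $f$ is orthogonal.

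For part (ii), I would exploit the commutativity of the projections to manipulate the factors of $Q_{C,F}^\psi=P_{\bigvee C}^\psi\prod_{r\in F\setminus C}(\mathrm{id}_\mathcal{H}-P_r^\psi)$. When $p\leq\bigvee C$, the idempotence and commutativity of $P_{\bigvee C}^\psi$ give $Q_{C,F}^\psi=Q_{C,F}^\psi P_{\bigvee C}^\psi$; applying part (i) with $q=\bigvee C$ (so that $p\vee(\bigvee C)=\bigvee C$) rewrites $P_{\bigvee C}^\psi\psi_p(x)=\psi_p(x)P_{p^{-1}(\bigvee C)}^\psi$, producing the stated identity. When $p\not\leq\bigvee C$, I would note that since $C$ is an initial segment and $p\in F$, necessarily $p\in F\setminus C$, so $(\mathrm{id}_\mathcal{H}-P_p^\psi)$ is one of the commuting factors of $Q_{C,F}^\psi$; as $P_p^\psi\psi_p(x)=\psi_p(x)$, we have $(\mathrm{id}_\mathcal{H}-P_p^\psi)\psi_p(x)=0$ and therefore $Q_{C,F}^\psi\psi_p(x)=0$.

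I expect the orthogonal-complement computation in the case $p\neq p\vee q$ of part (i) to be the main obstacle, as it is the one step that genuinely uses Nica covariance rather than merely the commutation relation $P_p^\psi P_q^\psi=P_{p\vee q}^\psi$; everything else amounts to bookkeeping with (T2) and idempotent commuting projections.
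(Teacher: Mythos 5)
Your proposal is correct and follows essentially the same route as the paper: the same case split in part (i) ($p\vee q=\infty$, then $p=p\vee q$, then the substantive case handled separately on $\overline{\psi_{p^{-1}(p\vee q)}(\mathbf{X}_{p^{-1}(p\vee q)})\mathcal{H}}$ and its complement via the Nica-covariance consequence for $\psi_p(\mathbf{X}_p)^*\psi_q(\mathbf{X}_q)$), and the same two-case deduction of part (ii). The only cosmetic difference is that you justify $P_q^\psi\psi_p(x)=\psi_p(x)$ for $q\leq p$ via the relation $P_p^\psi P_q^\psi=P_{p\vee q}^\psi$ from the preceding proposition, where the paper asserts it directly; both are fine.
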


Using the previous result we can show that every element of $\psi_*(\mathcal{NT}_\mathbf{X}^{\delta_\mathbf{X}})$ commutes with the projections $\{P_p^\psi:p\in P\}$ and $\{Q_{C,F}^\psi: C\subseteq F\}$. 

\begin{prop}
\label{projections commute with core}
For any $q\in P$, the projection $P_q^\psi$ commutes with every element of $\psi_*(\mathcal{NT}_\mathbf{X}^{\delta_\mathbf{X}})$. In particular, if $F\subseteq P$ is finite and $C\subseteq F$, then $Q_{C,F}^\psi$ commutes with every element of $\psi_*(\mathcal{NT}_\mathbf{X}^{\delta_\mathbf{X}})$.
\begin{proof}
Since $\psi_*(\mathcal{NT}_\mathbf{X}^{\delta_\mathbf{X}})=\cspan \{\psi_p(\mathbf{X}_p)\psi_p(\mathbf{X}_p)^*\}$, it suffices to show that 
\[
P_q^\psi\psi_p(x)\psi_p(y)^*=\psi_p(x)\psi_p(y)^*P_q^\psi
\]
for each $x,y\in \mathbf{X}_p$. Via two applications of Lemma~\ref{projections and psi}, we see that
\begin{align*}
P_q^\psi\psi_p(x)\psi_p(y)^*
&=
\begin{cases}
\psi_p(x)P_{p^{-1}(p\vee q)}^\psi\psi_p(y)^* & \text{if $p\vee q<\infty$}\\
0 &\text{otherwise}
\end{cases}\\
&=
\begin{cases}
\psi_p(x)\big(\psi_p(y)P_{p^{-1}(p\vee q)}^\psi\big)^* & \text{if $p\vee q<\infty$}\\
0 &\text{otherwise}
\end{cases}\\
&=\psi_p(x)\big(P_q^\psi\psi_p(y)\big)^*\\
&=\psi_p(x)\psi_p(y)^*P_q^\psi,
\end{align*}
as required. 
\end{proof}
\end{prop}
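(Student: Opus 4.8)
The plan is to reduce to the generators of the fixed-point algebra and then push $P_q^\psi$ from one side to the other using Lemma~\ref{projections and psi}(i) twice. Since $\psi_*(\mathcal{NT}_\mathbf{X}^{\delta_\mathbf{X}})=\cspan\{\psi_p(\mathbf{X}_p)\psi_p(\mathbf{X}_p)^*\}$ and left and right multiplication by the bounded operator $P_q^\psi$ are norm-continuous, it suffices to verify that $P_q^\psi$ commutes with each generator $\psi_p(x)\psi_p(y)^*$ for $p\in P$ and $x,y\in\mathbf{X}_p$.

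First I would move $P_q^\psi$ past $\psi_p(x)$. If $p\vee q<\infty$, Lemma~\ref{projections and psi}(i) gives $P_q^\psi\psi_p(x)=\psi_p(x)P_{p^{-1}(p\vee q)}^\psi$, so $P_q^\psi\psi_p(x)\psi_p(y)^*=\psi_p(x)P_{p^{-1}(p\vee q)}^\psi\psi_p(y)^*$. The key manoeuvre is then to absorb the intermediate projection into the adjoint: since $P_{p^{-1}(p\vee q)}^\psi$ is self-adjoint, $P_{p^{-1}(p\vee q)}^\psi\psi_p(y)^*=\big(\psi_p(y)P_{p^{-1}(p\vee q)}^\psi\big)^*$, and a second application of Lemma~\ref{projections and psi}(i) (now applied to $y$) identifies $\psi_p(y)P_{p^{-1}(p\vee q)}^\psi=P_q^\psi\psi_p(y)$. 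Taking adjoints and using that $P_q^\psi$ is self-adjoint then returns $\psi_p(x)\psi_p(y)^*P_q^\psi$, as required. In the remaining case $p\vee q=\infty$, Lemma~\ref{projections and psi}(i) forces $P_q^\psi\psi_p(x)=0$ and $P_q^\psi\psi_p(y)=0$; hence $P_q^\psi\psi_p(x)\psi_p(y)^*=0$ and likewise $\psi_p(x)\psi_p(y)^*P_q^\psi=\psi_p(x)\big(P_q^\psi\psi_p(y)\big)^*=0$, so the two sides again agree.

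The claim for $Q_{C,F}^\psi$ is then immediate. By definition $Q_{C,F}^\psi=P_{\bigvee C}^\psi\prod_{p\in F\setminus C}(\mathrm{id}_\mathcal{H}-P_p^\psi)$ is a product of the projection $P_{\bigvee C}^\psi$ and the operators $\mathrm{id}_\mathcal{H}-P_p^\psi$ for $p\in F\setminus C$. Each factor commutes with every element of $\psi_*(\mathcal{NT}_\mathbf{X}^{\delta_\mathbf{X}})$---the first by the case just proved, and each $\mathrm{id}_\mathcal{H}-P_p^\psi$ because $P_p^\psi$ does---so their product $Q_{C,F}^\psi$ commutes with the core as well. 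I do not expect any serious obstacle here: the entire argument rests on the two-sided intertwining relation of Lemma~\ref{projections and psi}(i), and the only delicacy is the careful bookkeeping with adjoints (using self-adjointness of the projections $P_q^\psi$) together with confirming that the $p\vee q=\infty$ branch annihilates both sides simultaneously.
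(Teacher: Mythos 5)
Your proof is correct and follows essentially the same route as the paper: reduce to the generators $\psi_p(x)\psi_p(y)^*$ by linearity and continuity, apply Lemma~\ref{projections and psi}(i) twice (once to $x$ and once, via adjoints, to $y$), and observe that both sides vanish when $p\vee q=\infty$. Your explicit treatment of the $p\vee q=\infty$ branch and of the ``in particular'' claim about $Q_{C,F}^\psi$ merely spells out details the paper leaves implicit.
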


We now show that the projections $\{Q_{C,F}^\psi: \text{$C$ is an initial segment of $F$}\}$ are mutually orthogonal and decompose the identity operator on $\mathcal{H}$.

\begin{prop}
Let $(G,P)$ be a quasi-lattice ordered group and $\mathbf{X}$ a compactly aligned product system over $P$ with coefficient algebra $A$. Let $\psi:\mathbf{X}\rightarrow \mathcal{B}(\mathcal{H})$ be a Nica covariant representation of $\mathbf{X}$. Let $F\subseteq P$ be finite. Then
\begin{enumerate}[label=\upshape(\roman*)]
\item if $C\subseteq F$ is not an initial segment of $F$, then $Q_{C,F}^\psi=0$;
\item $\{Q_{C,F}^\psi:C\subseteq F \text{ is an initial segment of } F\}$ is a decomposition of the identity on $\mathcal{H}$ into mutually orthogonal projections. 
\end{enumerate}
\begin{proof}
Suppose $C\subseteq F$ is not an initial segment of $F$. If $\bigvee C=\infty$, then 
\[
Q_{C,F}^\psi=Q_{C,F}^\psi P_{\bigvee C}^\psi=Q_{C,F}^\psi P_\infty^\psi=0.
\]
Alternatively, $\bigvee C<\infty$ and $C\neq \{t\in F:t\leq \bigvee C\}$. Thus, $C\neq F$. Choose $t\in F\setminus C$ with $t\leq \bigvee C$. Since $t \vee  \left(\bigvee C\right)=\bigvee C$, we see that
\begin{align*}
Q_{C,F}^\psi
=Q_{C,F}^\psi P_{\bigvee C}^\psi(\mathrm{id}_\mathcal{H}-P_t^\psi)
&=Q_{C,F}^\psi\big(P_{\bigvee C}^\psi- P_{\bigvee C}^\psi P_t^\psi\big)\\
&=Q_{C,F}^\psi\big(P_{\bigvee C}^\psi-P_{t\vee  \left(\bigvee C\right)}^\psi\big)
=Q_{C,F}^\psi\big(P_{\bigvee C}^\psi-P_{\bigvee C}^\psi\big)
=0.
\end{align*}
Thus, $Q_{C,F}^\psi=0$, which proves part (i). 

We now prove part (ii). Since $Q_{C,F}^\psi=0$ whenever $C$ is not an initial segment of $F$, it suffices to show that $\{Q_{C,F}^\psi:C\subseteq F\}$ is a decomposition of the identity into mutually orthogonal projections. Firstly, we show orthogonality. Suppose $C,D\subseteq F$ are distinct. Without loss of generality, we may assume that $D\setminus C\neq \emptyset$. Thus, $C\neq F$ and we can choose $t\in D\setminus C$.  Since $t \vee  \left(\bigvee D\right)=\bigvee D$, we have
\begin{align*}
Q_{C,F}^\psi Q_{D,F}^\psi
=Q_{C,F}^\psi P_{\bigvee C}^\psi \big(\mathrm{id}_\mathcal{H} -P_t^\psi\big)P_{\bigvee D}^\psi Q_{D,F}^\psi
&=Q_{C,F}^\psi \big(P_{\bigvee C}^\psi -P_{t\vee  \left(\bigvee C\right)}^\psi\big)P_{\bigvee D}^\psi Q_{D,F}^\psi\\
&=Q_{C,F}^\psi \big(P_{\left(\bigvee C\right)\vee \left(\bigvee D\right)}^\psi -P_{t\vee  \left(\bigvee C\right) \vee  \left(\bigvee D\right)}^\psi\big) Q_{D,F}^\psi\\
&=Q_{C,F}^\psi \big(P_{\left(\bigvee C\right)\vee \left(\bigvee D\right)}^\psi -P_{\left(\bigvee C\right) \vee \left(\bigvee D\right)}^\psi\big) Q_{D,F}^\psi\\
&=0. 
\end{align*} 
It remains to check that $\sum_{C\subseteq F}Q_{C,F}^\psi=\mathrm{id}_\mathcal{H}$. To prove this, we will use induction on $|F|$. When $|F|=0$ we have
\begin{align*}
\sum_{C\subseteq F}Q_{C,F}^\psi=Q_{\emptyset,\emptyset}^\psi=P_{\bigvee \emptyset}^\psi=P_e^\psi=\mathrm{id}_\mathcal{H}.
\end{align*}
Now let $n\geq 0$ and suppose that $\sum_{C\subseteq F}Q_{C,F}^\psi=\mathrm{id}_\mathcal{H}$ whenever $F\subseteq P$ and $|F|=n$. Fix $F'\subseteq P$ with $|F'|=n+1$. Then, for any $y\in F'$, we have
\begin{align*}
\sum_{C\subseteq F'}Q_{C,F'}^\psi
&=\sum_{C\subseteq F', \, y\in C}Q_{C,F'}^\psi+\sum_{C\subseteq F', \, y\not\in C}Q_{C,F'}^\psi\\
&=\sum_{C\subseteq F', \, y\in C}P_{\bigvee C}^\psi\prod_{p\in F'\setminus C}\big(\mathrm{id}_\mathcal{H}-P_p^\psi\big)+\sum_{C\subseteq F', \, y\not\in C}P_{\bigvee C}^\psi\prod_{p\in F'\setminus C}\big(\mathrm{id}_\mathcal{H}-P_p^\psi\big)\\
&=\sum_{C\subseteq F'\setminus \{y\}}P_{\bigvee (C\cup \{y\})}^\psi\prod_{p\in (F'\setminus \{y\})\setminus C}\big(\mathrm{id}_\mathcal{H}-P_p^\psi\big)\\
& \quad \quad \quad \quad \quad \quad 
+\sum_{C\subseteq F'\setminus \{y\}}P_{\bigvee C}^\psi\prod_{p\in (F'\setminus \{y\})\setminus C}\big(\mathrm{id}_\mathcal{H}-P_p^\psi\big)\big(\mathrm{id}_\mathcal{H}-P_y^\psi\big)\\
&=\sum_{C\subseteq F'\setminus \{y\}}P_{\bigvee C}^\psi P_y^\psi\prod_{p\in (F'\setminus \{y\})\setminus C}\big(\mathrm{id}_\mathcal{H}-P_p^\psi\big)\\
& \quad \quad \quad \quad \quad \quad 
+\sum_{C\subseteq F'\setminus \{y\}}P_{\bigvee C}^\psi\prod_{p\in (F'\setminus \{y\})\setminus C}\big(\mathrm{id}_\mathcal{H}-P_p^\psi\big)\big(\mathrm{id}_\mathcal{H}-P_y^\psi\big)\\
&=\big(P_y^\psi+\big(\mathrm{id}_\mathcal{H}-P_y^\psi\big)\big)\sum_{C\subseteq F'\setminus \{y\}}P_{\bigvee C}^\psi \prod_{p\in (F'\setminus \{y\})\setminus C}\big(\mathrm{id}_\mathcal{H}-P_p^\psi\big)\\
&=\sum_{C\subseteq F'\setminus \{y\}}P_{\bigvee C}^\psi \prod_{p\in (F'\setminus \{y\})\setminus C}\big(\mathrm{id}_\mathcal{H}-P_p^\psi\big)\\
&=\sum_{C\subseteq F'\setminus \{y\}}Q_{C,F'\setminus \{y\}}^\psi\\
&=\mathrm{id}_\mathcal{H},
\end{align*}
where the last equality follows from applying the inductive hypothesis to $F'\setminus \{y\}$. 
\end{proof}
\end{prop}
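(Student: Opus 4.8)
The plan is to reduce the entire statement to the single relation $P_p^\psi P_q^\psi = P_{p\vee q}^\psi$ established in the preceding proposition, together with the convention $P_\infty^\psi = 0$ and the commutativity of the family $\{P_p^\psi : p \in P\}$; once these are in hand, both parts become purely algebraic manipulations of commuting projections.

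For part (i), I would split into cases according to the value of $\bigvee C$. If $\bigvee C = \infty$, then the factor $P_{\bigvee C}^\psi = P_\infty^\psi = 0$ appearing in $Q_{C,F}^\psi$ immediately forces $Q_{C,F}^\psi = 0$. If instead $\bigvee C < \infty$ while $C$ fails to be an initial segment, then, since the inclusion $C \subseteq \{t \in F : t \le \bigvee C\}$ always holds, the failure of equality produces some $t \in F \setminus C$ with $t \le \bigvee C$. This $t$ contributes a factor $\mathrm{id}_\mathcal{H} - P_t^\psi$ to $Q_{C,F}^\psi$, and because $t \le \bigvee C$ gives $t \vee \bigvee C = \bigvee C$, the fundamental relation yields $P_{\bigvee C}^\psi(\mathrm{id}_\mathcal{H} - P_t^\psi) = P_{\bigvee C}^\psi - P_{t \vee \bigvee C}^\psi = 0$, so the whole product vanishes.

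For part (ii), the orthogonality of $Q_{C,F}^\psi$ and $Q_{D,F}^\psi$ for distinct $C, D \subseteq F$ follows by choosing (without loss of generality) some $t \in D \setminus C$, which supplies a factor $\mathrm{id}_\mathcal{H} - P_t^\psi$ in $Q_{C,F}^\psi$ and a factor $P_{\bigvee D}^\psi$ in $Q_{D,F}^\psi$. After commuting the projections past one another, the relevant block $(P_{\bigvee C}^\psi - P_{t \vee \bigvee C}^\psi) P_{\bigvee D}^\psi$ collapses via the fundamental relation to $P_{(\bigvee C) \vee (\bigvee D)}^\psi - P_{t \vee (\bigvee C) \vee (\bigvee D)}^\psi$, which is zero since $t \le \bigvee D$ makes the two subscripts coincide. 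To prove the completeness relation $\sum_{C \subseteq F} Q_{C,F}^\psi = \mathrm{id}_\mathcal{H}$, I would induct on $|F|$, the base case $F = \emptyset$ giving $Q_{\emptyset, \emptyset}^\psi = P_e^\psi = \mathrm{id}_\mathcal{H}$. In the inductive step I would fix $y \in F'$, split the sum over $C \subseteq F'$ according to whether $y \in C$, reindex both halves by the subsets of $F' \setminus \{y\}$, and use $P_{\bigvee(C \cup \{y\})}^\psi = P_{\bigvee C}^\psi P_y^\psi$ to expose the common factor $P_y^\psi + (\mathrm{id}_\mathcal{H} - P_y^\psi) = \mathrm{id}_\mathcal{H}$; what remains is exactly $\sum_{C \subseteq F' \setminus \{y\}} Q_{C, F' \setminus \{y\}}^\psi$, which equals $\mathrm{id}_\mathcal{H}$ by the inductive hypothesis. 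Combining this with part (i) then realises the decomposition as being indexed precisely by the initial segments of $F$.

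I expect the bookkeeping in this inductive step to be the main obstacle: one must correctly match the subsets of $F'$ containing $y$ with those of $F' \setminus \{y\}$, track how the index set of the product $\prod_{p \in F' \setminus C}$ transforms under the reindexing, and recognise the telescoping that factors out the identity. By contrast, part (i) and the orthogonality assertion are short once the correct distinguishing element $t$ has been located.
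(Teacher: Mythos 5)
Your proposal is correct and follows essentially the same route as the paper: part (i) via the dichotomy on $\bigvee C$ and the vanishing of $P_{\bigvee C}^\psi(\mathrm{id}_\mathcal{H}-P_t^\psi)$ for $t\le\bigvee C$, orthogonality via an element $t\in D\setminus C$, and completeness by induction on $|F|$ with the split over $y\in C$ versus $y\notin C$ and the identity $P_{\bigvee(C\cup\{y\})}^\psi=P_{\bigvee C}^\psi P_y^\psi$. All steps rest, as you say, on the relation $P_p^\psi P_q^\psi=P_{p\vee q}^\psi$ and the convention $P_\infty^\psi=0$, exactly as in the paper.
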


Putting these results together we get an expression for the norm of an element in $\psi_*(\mathcal{NT}_\mathbf{X}^{\delta_\mathbf{X}})$ that does not depend on the representation $\psi$. 

\begin{lem}
\label{norm of things in core}
Let $(G,P)$ be a quasi-lattice ordered group and $\mathbf{X}$ a compactly aligned product system over $P$ with coefficient algebra $A$. Let $\psi:\mathbf{X}\rightarrow \mathcal{B}(\mathcal{H})$ be a Nica covariant representation of $\mathbf{X}$ on a Hilbert space $\mathcal{H}$. If $Z:=\sum_k \psi_{p_k}(x_k)\psi_{p_k}(y_k)^*\in \psi_*(\mathcal{NT}_\mathbf{X}^{\delta_\mathbf{X}})$ is a finite sum, then
\[
\norm{Z}_{\mathcal{B}(\mathcal{H})}=\max\bigg\{\bigg\|Q_{C,F}^\psi \rho_{\bigvee C}^\psi\bigg(\sum_k \iota_{p_k}^{\bigvee C}\left(\Theta_{x_k,y_k}\right)\bigg)\bigg\|_{\mathcal{B}(\mathcal{H})}: \text{$C$ is an initial segment of $F$}\bigg\}
\]
for any finite set $F\subseteq P$ containing each $p_k$. Furthermore, if the representation
\begin{align*}
A\ni a \mapsto \psi_e(a)\prod_{t\in K}\big(\mathrm{id}_\mathcal{H}-P_t^\psi\big)\in \mathcal{B}(\mathcal{H})
\end{align*}
is faithful for any finite set $K\subseteq P\setminus \{e\}$, then
\[
\norm{Z}_{\mathcal{B}(\mathcal{H})}=\max\bigg\{\bigg\|\sum_k \iota_{p_k}^{\bigvee C}\left(\Theta_{x_k,y_k}\right)\bigg\|_{\mathcal{L}_A\left(\mathbf{X}_{\bigvee C}\right)}: \text{$C$ is an initial segment of $F$}\bigg\}.
\]
\begin{proof}
Let $F$ be a finite subset of $P$ containing each $p_k$. Since $Q_{C,F}^\psi$ commutes with $Z$ for each $C\subseteq F$ (by Proposition~\ref{projections commute with core}) and $\{Q_{C,F}^\psi: \text{$C$ is an initial segment of $F$}\}$ is an orthogonal decomposition of the identity, we have that
\[
\norm{Z}_{\mathcal{B}(\mathcal{H})}=\max\left\{\norm{Q_{C,F}^\psi Z}_{\mathcal{B}(\mathcal{H})}: \text{$C$ is an initial segment of $F$}\right\}.
\]
However, for any initial segment $C$ of $F$, Lemma~\ref{projections and psi} shows that
\begin{align*}
Q_{C,F}^\psi Z
=Q_{C,F}^\psi  \sum_k \psi_{p_k}(x_k)\psi_{p_k}(y_k)^*
=Q_{C,F}^\psi  \sum_{k:p_k\leq \bigvee C} \psi_{p_k}(x_k)\psi_{p_k}(y_k)^*P_{\bigvee C}^\psi
\end{align*}
Using parts (ii) and (iii) of Proposition~\ref{existence of rho map}, this is equal to
\begin{align*}
Q_{C,F}^\psi  \sum_{k:p_k\leq \bigvee C} \rho_{p_k}^\psi\left(\Theta_{x_k,y_k}\right)P_{\bigvee C}^\psi
&=Q_{C,F}^\psi  \sum_{k:p_k\leq \bigvee C} \rho_{\bigvee C}^\psi\big(\iota_{p_k}^{\bigvee C}\left(\Theta_{x_k,y_k}\right)\big)\\
&=Q_{C,F}^\psi  \rho_{\bigvee C}^\psi\bigg(\sum_k\iota_{p_k}^{\bigvee C}\left(\Theta_{x_k,y_k}\right)\bigg).
\end{align*}
Now suppose that for any finite set $K\subseteq P\setminus \{e\}$, the representation
\begin{align*}
A\ni a \mapsto \psi_e(a)\prod_{t\in K}\big(\mathrm{id}_\mathcal{H}-P_t^\psi\big)\in \mathcal{B}(\mathcal{H})
\end{align*}
is faithful. To complete the proof we will show that the representation
\[
\mathcal{L}_A\left(\mathbf{X}_{\bigvee C}\right)\ni T \mapsto Q_{C,F}^\psi  \rho_{\bigvee C}^\psi(T)\in \mathcal{B}(\mathcal{H})
\]
is faithful, and hence
\begin{align*}
\norm{Z}_{\mathcal{B}(\mathcal{H})}&=\max\left\{\norm{Q_{C,F}^\psi Z}_{\mathcal{B}(\mathcal{H})}: \text{$C$ is an initial segment of $F$}\right\}\\
&=\max\bigg\{\bigg\|Q_{C,F}^\psi  \rho_{\bigvee C}^\psi\bigg(\sum_k\iota_{p_k}^{\bigvee C}\left(\Theta_{x_k,y_k}\right)\bigg)\bigg\|_{\mathcal{B}(\mathcal{H})}: \text{$C$ is an initial segment of $F$}\bigg\}\\
&=\max\bigg\{\bigg\|\sum_k\iota_{p_k}^{\bigvee C}\left(\Theta_{x_k,y_k}\right)\bigg\|_{\mathcal{L}_A\left(\mathbf{X}_{\bigvee C}\right)}: \text{$C$ is an initial segment of $F$}\bigg\}.
\end{align*}
Let $\mathcal{K}:=\prod_{\left\{t\in F\setminus C:t\vee \left(\bigvee C\right) <\infty\right\}}\left(\mathrm{id}_\mathcal{H}-P_{\left(\bigvee C\right)^{-1}\left(t\vee \left(\bigvee C\right)\right)}^\psi\right)\mathcal{H}$. Since $\psi_e(a)P_p^\psi=P_p^\psi \psi_e(a)$, for each $a\in A$ and $p\in P$, by Lemma~\ref{projections and psi}, we see that $\mathcal{K}$ is a $\psi_e$-invariant subspace of $\mathcal{H}$. As $C$ is an initial segment of $F$, if $t\in F\setminus C$ with $t\vee \left(\bigvee C\right) <\infty$, then $t\not\leq \bigvee C$, and so $\left(\bigvee C\right)^{-1}\left(t\vee\left(\bigvee C\right)\right)\neq e$. Thus, $\psi_e |_\mathcal{K}$ is faithful. Therefore, by Proposition~\ref{existence of rho map} it follows that $\mathcal{M}:=\overline{\psi_{\bigvee C}(\mathbf{X}_{\bigvee C})\mathcal{K}}$ is a $\rho_{\bigvee C}^\psi$-invariant subspace and $\rho_{\bigvee C}^\psi|_\mathcal{M}$ is faithful. To show that the map $\mathcal{L}_A\left(\mathbf{X}_{\bigvee C}\right)\ni T \mapsto Q_{C,F}^\psi  \rho_{\bigvee C}^\psi(T)\in \mathcal{B}(\mathcal{H})$ is faithful, it remains to check that $\mathcal{M}\subseteq Q_{C,F}^\psi\mathcal{H}$. Lemma~\ref{projections and psi} tells us that for any $x\in \mathbf{X}_{\bigvee C}$, we have 
\begin{align*}
Q_{C,F}^\psi\psi_{\bigvee C}(x)
=P_{\bigvee C}^\psi\hspace{-0.2em}\prod_{t\in F\setminus C}\big(\mathrm{id}_\mathcal{H}-P_t^\psi\big)\psi_{\bigvee C}(x)
=\psi_{\bigvee C}(x)\hspace{-0.7em}\prod_{\substack{t\in F\setminus C,\\ t\vee \hspace{-1em}\left(\bigvee C\right)<\infty}}\left(\mathrm{id}_\mathcal{H}-P_{\left(\bigvee C\right)^{-1}\left(t\vee \left(\bigvee C\right)\right)}^\psi\right).
\end{align*}
Therefore, $Q_{C,F}^\psi$ is the identity on 
\[
\overline{\psi_{\bigvee C}(\mathbf{X}_{\bigvee C})\prod_{\substack{t\in F\setminus C,\\ t\vee \left(\bigvee C\right) <\infty}}\left(\mathrm{id}_\mathcal{H}-P_{\left(\bigvee C\right)^{-1}\left(t\vee \left(\bigvee C\right)\right)}^\psi\right)\mathcal{H}}
=\overline{\psi_{\bigvee C}\left(\mathbf{X}_{\bigvee C}\right)\mathcal{K}}
=\mathcal{M},
\] and so $\mathcal{M}\subseteq Q_{C,F}^\psi\mathcal{H}$.
\end{proof}
\end{lem}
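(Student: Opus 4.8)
The plan is to leverage the orthogonal decomposition of the identity given by the projections $\{Q_{C,F}^\psi:\text{$C$ an initial segment of $F$}\}$, together with the fact (Proposition~\ref{projections commute with core}) that each $Q_{C,F}^\psi$ commutes with $Z$. Since these projections are mutually orthogonal, sum to $\mathrm{id}_\mathcal{H}$, and commute with $Z$, the stated norm formula for an operator commuting with an orthogonal decomposition of the identity yields
\[
\norm{Z}_{\mathcal{B}(\mathcal{H})}=\max\left\{\norm{Q_{C,F}^\psi Z}_{\mathcal{B}(\mathcal{H})}:\text{$C$ is an initial segment of $F$}\right\}.
\]
The first task is then to identify $Q_{C,F}^\psi Z$ with the operator appearing in the statement.

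To carry this out I would work on $Z$ term by term. For a summand $\psi_{p_k}(x_k)\psi_{p_k}(y_k)^*$, Lemma~\ref{projections and psi}(ii) annihilates the term when $p_k\not\leq\bigvee C$, while for $p_k\leq\bigvee C$ it lets me insert a projection to obtain $Q_{C,F}^\psi\psi_{p_k}(x_k)\psi_{p_k}(y_k)^*P_{\bigvee C}^\psi$ (adjointing the identity in Lemma~\ref{projections and psi}(i) turns $P_{p_k^{-1}(\bigvee C)}^\psi\psi_{p_k}(y_k)^*$ into $\psi_{p_k}(y_k)^*P_{\bigvee C}^\psi$). I then replace $\psi_{p_k}(x_k)\psi_{p_k}(y_k)^*$ by $\rho_{p_k}^\psi(\Theta_{x_k,y_k})$ via Proposition~\ref{existence of rho map}(ii), and absorb the trailing $P_{\bigvee C}^\psi$ using Proposition~\ref{existence of rho map}(iii), which converts $\rho_{p_k}^\psi(\Theta_{x_k,y_k})P_{\bigvee C}^\psi$ into $\rho_{\bigvee C}^\psi(\iota_{p_k}^{\bigvee C}(\Theta_{x_k,y_k}))$ exactly when $p_k\leq\bigvee C$. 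Since $\iota_{p_k}^{\bigvee C}$ is the zero map precisely when $p_k\not\leq\bigvee C$, the restricted sum equals the full sum, giving $Q_{C,F}^\psi Z=Q_{C,F}^\psi\rho_{\bigvee C}^\psi(\sum_k\iota_{p_k}^{\bigvee C}(\Theta_{x_k,y_k}))$ and hence the first displayed formula. This part is routine bookkeeping.

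For the second formula, under the faithfulness hypothesis, the goal is to show that for each initial segment $C$ the map $T\mapsto Q_{C,F}^\psi\rho_{\bigvee C}^\psi(T)$ is isometric on $\mathcal{L}_A(\mathbf{X}_{\bigvee C})$, so that the norm of $Q_{C,F}^\psi\rho_{\bigvee C}^\psi(\sum_k\iota_{p_k}^{\bigvee C}(\Theta_{x_k,y_k}))$ coincides with $\norm{\sum_k\iota_{p_k}^{\bigvee C}(\Theta_{x_k,y_k})}_{\mathcal{L}_A(\mathbf{X}_{\bigvee C})}$. The upper bound $\norm{Q_{C,F}^\psi\rho_{\bigvee C}^\psi(T)}\leq\norm{T}$ is immediate, since $Q_{C,F}^\psi$ is a projection and $\rho_{\bigvee C}^\psi$ is norm-decreasing. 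For the matching lower bound I would exhibit a subspace on which $\rho_{\bigvee C}^\psi$ stays faithful and on which $Q_{C,F}^\psi$ acts as the identity, and then estimate the norm from below by restricting to it.

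Constructing that subspace is where the genuine work lies, and I expect it to be the main obstacle. The idea is to set $\mathcal{K}:=\prod_{\{t\in F\setminus C:\,t\vee(\bigvee C)<\infty\}}(\mathrm{id}_\mathcal{H}-P_{(\bigvee C)^{-1}(t\vee(\bigvee C))}^\psi)\mathcal{H}$ and $\mathcal{M}:=\overline{\psi_{\bigvee C}(\mathbf{X}_{\bigvee C})\mathcal{K}}$, and to verify three things. First, $\mathcal{K}$ is $\psi_e$-invariant, because each $\psi_e(a)$ commutes with every $P_p^\psi$ by Lemma~\ref{projections and psi}. Second, and crucially, $\psi_e|_\mathcal{K}$ is faithful: since $C$ is an initial segment of $F$, any $t\in F\setminus C$ with $t\vee(\bigvee C)<\infty$ satisfies $t\not\leq\bigvee C$, which forces $(\bigvee C)^{-1}(t\vee(\bigvee C))\neq e$, so $\mathcal{K}$ is exactly of the form on which the hypothesised faithful representation acts and the faithfulness hypothesis applies. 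Third, $\mathcal{M}\subseteq Q_{C,F}^\psi\mathcal{H}$, which I would check by computing $Q_{C,F}^\psi\psi_{\bigvee C}(x)$ through Lemma~\ref{projections and psi} and reading off that $Q_{C,F}^\psi$ is the identity on $\mathcal{M}$. Granting these, Proposition~\ref{existence of rho map}(iv) makes $\mathcal{M}$ a $\rho_{\bigvee C}^\psi$-invariant subspace on which $\rho_{\bigvee C}^\psi$ is faithful, hence isometric; restricting $Q_{C,F}^\psi\rho_{\bigvee C}^\psi(T)$ to $\mathcal{M}$, where it agrees with $\rho_{\bigvee C}^\psi(T)|_\mathcal{M}$, delivers the lower bound $\norm{T}$ and completes the argument.
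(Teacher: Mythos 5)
Your proposal is correct and follows essentially the same route as the paper's proof: the same orthogonal decomposition via the $Q_{C,F}^\psi$, the same term-by-term identification of $Q_{C,F}^\psi Z$ with $Q_{C,F}^\psi\rho_{\bigvee C}^\psi\big(\sum_k\iota_{p_k}^{\bigvee C}(\Theta_{x_k,y_k})\big)$, and the same subspaces $\mathcal{K}$ and $\mathcal{M}$ to establish faithfulness (hence isometry) of $T\mapsto Q_{C,F}^\psi\rho_{\bigvee C}^\psi(T)$.
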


Now that we have an expression for the norm of elements in $\psi_*(\mathcal{NT}_\mathbf{X}^{\delta_\mathbf{X}})$, we are ready to show that the expectation $E_{\delta_\mathbf{X}}$ can be implemented spatially.

\begin{prop}
Let $(G,P)$ be a quasi-lattice ordered group and $\mathbf{X}$ a compactly aligned product system over $P$ with coefficient algebra $A$. Let $\psi:\mathbf{X}\rightarrow \mathcal{B}(\mathcal{H})$ be a Nica covariant representation of $\mathbf{X}$ on a Hilbert space $\mathcal{H}$. Suppose that for any finite set $K\subseteq P\setminus \{e\}$, the representation
\begin{align*}
A\ni a \mapsto \psi_e(a)\prod_{t\in K}\big(\mathrm{id}_\mathcal{H}-P_t^\psi\big)\in \mathcal{B}(\mathcal{H})
\end{align*}
is faithful. Then 
\begin{enumerate}[label=\upshape(\roman*)]
\item
$\psi_*|_{\mathcal{NT}_\mathbf{X}^{\delta_\mathbf{X}}}$ is faithful; and
\item
there exists a linear map $E_\psi:\psi_*\left(\mathcal{NT}_\mathbf{X}\right)\rightarrow \psi_*(\mathcal{NT}_\mathbf{X}^{\delta_\mathbf{X}})$ such that 
\[
E_\psi \circ \psi_*= \psi_* \circ E_{\delta_\mathbf{X}}.
\]
\end{enumerate}
\begin{proof}
Firstly, we prove that the restriction of $\psi_*$ to $\mathcal{NT}_\mathbf{X}^{\delta_\mathbf{X}}$ is faithful. Fix a finite sum $Z:=\sum_k i_{\mathbf{X}_{p_k}}(x_k)i_{\mathbf{X}_{p_k}}(y_k)^*\in \mathcal{NT}_\mathbf{X}^{\delta_\mathbf{X}}$. Let $\sigma:\mathcal{NT}_\mathbf{X}\rightarrow \mathcal{B}(\mathcal{H}')$ be a faithful representation. Thus, $\sigma\circ i_\mathbf{X}$ is a Nica covariant representation of $\mathbf{X}$ on $\mathcal{H}'$. For any finite set $F\subseteq P$ containing each $p_k$, two applications of Lemma~\ref{norm of things in core} show that 
\begingroup
\allowdisplaybreaks
\begin{align*}
\|&Z\|_{\mathcal{NT}_\mathbf{X}}
=
\norm{\sigma(Z)}_{\mathcal{B}(\mathcal{H}')}
=
\bigg\|\sum_k (\sigma\circ i_\mathbf{X})_{p_k}(x_k)(\sigma\circ i_\mathbf{X})_{p_k}(y_k)^*\bigg\|_{\mathcal{B}(\mathcal{H}')}\\
&=
\max\bigg\{\bigg\|Q_{C,F}^{\sigma \circ i_\mathbf{X}} \rho_{\bigvee C}^{\sigma \circ i_\mathbf{X}}\bigg(\sum_k \iota_{p_k}^{\bigvee C}\left(\Theta_{x_k,y_k}\right)\bigg)\bigg\|_{\mathcal{B}(\mathcal{H}')}:\text{$C$ is an initial segment of $F$}\bigg\}\\
&\leq
\max\bigg\{\bigg\|\sum_k \iota_{p_k}^{\bigvee C}\left(\Theta_{x_k,y_k}\right)\bigg\|_{\mathcal{L}_A\left(\mathbf{X}_{\bigvee C}\right)}: \text{$C$ is an initial segment of $F$}\bigg\}\\
&=\bigg\|\sum_k \psi_{p_k}(x_k)\psi_{p_k}(y_k)^*\bigg\|_{\mathcal{B}(\mathcal{H})}\\
&=\norm{\psi_*(Z)}_{\mathcal{B}(\mathcal{H})},
\end{align*}
\endgroup
where we used the fact that each $Q_{C,F}^{\sigma \circ i_\mathbf{X}}$ is a projection and each $*$-homomorphism $\rho_{\bigvee C}^{\sigma \circ i_\mathbf{X}}$ is norm-decreasing.
Thus, $\psi_*|_{\mathcal{NT}_\mathbf{X}^{\delta_\mathbf{X}}}$ is faithful. 

Next we prove part (ii). We first show that for any finite sum $\sum_k \psi_{p_k}(x_k)\psi_{q_k}(y_k)^*$, we have
\begin{equation}
\label{norm estimate onto core}
\begin{aligned}
\bigg\|\sum_{k:p_k=q_k}\psi_{p_k}(x_k)\psi_{p_k}(y_k)^*\bigg\|_{\mathcal{B}(\mathcal{H})}\leq \bigg\|\sum_k \psi_{p_k}(x_k)\psi_{q_k}(y_k)^*\bigg\|_{\mathcal{B}(\mathcal{H})}. 
\end{aligned}
\end{equation}
Let $F\subseteq P$ be the finite set consisting of each $p_k$ and $q_k$. By Lemma~\ref{norm of things in core}, there exists an initial segment $C$ of $F$ such that 
\[
\bigg\|\sum_{k:p_k=q_k}\psi_{p_k}(x_k)\psi_{p_k}(y_k)^*\bigg\|_{\mathcal{B}(\mathcal{H})}=\bigg\|\sum_{k:p_k=q_k}\iota_{p_k}^{\bigvee C}(\Theta_{x_k,y_k})\bigg\|_{\mathcal{L}_A\left(\mathbf{X}_{\bigvee C}\right)}.
\] 
For each $s,t\in C$ with $s\neq t$ and $\left(s^{-1}\left(\bigvee C\right)\right)\vee \left(t^{-1}\left(\bigvee C\right)\right)<\infty$ define
\begin{align*}
\beta_{s,t}
:=
\begin{cases}
s\left(\left(s^{-1}\left(\bigvee C\right)\right)\vee \left(t^{-1}\left(\bigvee C\right)\right)\right) &\text{if $s^{-1}\left(\bigvee C\right)<\left(s^{-1}\left(\bigvee C\right)\right)\vee \left(t^{-1}\left(\bigvee C\right)\right)$}\\
t\left(\left(s^{-1}\left(\bigvee C\right)\right)\vee \left(t^{-1}\left(\bigvee C\right)\right)\right) &\text{otherwise.}
\end{cases}
\end{align*}
Observe that for any $s,t\in C$ with $s\neq t$ and $\left(s^{-1}\left(\bigvee C\right)\right)\vee \left(t^{-1}\left(\bigvee C\right)\right)<\infty$ we have 
\[
s\big(\big(s^{-1}\big(\bigvee C\big)\big)\vee \big(t^{-1}\big(\bigvee C\big)\big)\big)\geq s\big(s^{-1}\big(\bigvee C\big)\big)=\bigvee C
\]
and
\[
t\big(\big((s^{-1}\big(\bigvee C\big)\big)\vee \big(t^{-1}\big(\bigvee C\big)\big)\big)\geq t\big(t^{-1}\big(\bigvee C\big)\big)=\bigvee C.
\] 
Thus, $\beta_{s,t}\geq \bigvee C$. Hence, $P_{\bigvee C}^\psi P^\psi_{\beta_{s,t}}=P^\psi_{\left(\bigvee C\right) \vee \beta_{s,t}}=P^\psi_{\beta_{s,t}}$, and we can define a projection
\[
R_{C,F}^\psi:=Q_{C,F}^\psi\hspace{-0.7em}\prod_{\substack{s,t\in C, \, s\neq t, \\ \left(s^{-1}\left(\bigvee C\right)\right)\vee \left(t^{-1}\left(\bigvee C\right)\right)<\infty}} \hspace{-1em}\big(P^\psi_{\bigvee C}-P^\psi_{\beta_{s,t}}\big).
\]
We claim that 
\[
R_{C,F}^\psi\bigg(\sum_k \psi_{p_k}(x_k)\psi_{q_k}(y_k)^*\bigg)R_{C,F}^\psi=R_{C,F}^\psi\sum_{k:p_k=q_k}\psi_{p_k}(x_k)\psi_{p_k}(y_k)^*.
\]
Since $R_{C,F}^\psi$ commutes with $\sum_{k:p_k=q_k}\psi_{p_k}(x_k)\psi_{p_k}(y_k)^*$ by Proposition~\ref{projections commute with core}, it suffices to show that $R_{C,F}^\psi\psi_p(x)\psi_q(y)^*R_{C,F}^\psi$=0 whenever $x\in \mathbf{X}_p$, $y\in \mathbf{X}_q$, with $p,q\in F$ and $p\neq q$. Firstly, if $p\not \in C$ or $q\not \in C$, then $p\not \leq \bigvee C$  or $q\not \leq \bigvee C$ (since $C$ is an initial segment of $F$), and so by Lemma~\ref{projections and psi} we have $Q_{C,F}^\psi\psi_p(x)=0$ or $Q_{C,F}^\psi\psi_q(y)=0$. Consequently, 
\begin{align*}
R_{C,F}^\psi\psi_p(x)\psi_q(y)^*R_{C,F}^\psi
&=R_{C,F}^\psi Q_{C,F}^\psi\psi_p(x)\psi_q(y)^*Q_{C,F}^\psi R_{C,F}^\psi\\
&=R_{C,F}^\psi Q_{C,F}^\psi\psi_p(x)(Q_{C,F}^\psi\psi_q(y))^*R_{C,F}^\psi\\
&=0.
\end{align*}
Alternatively, if $p,q\leq \bigvee C$ and $\left(p^{-1}\left(\bigvee C\right)\right)\vee \left(q^{-1}\left(\bigvee C\right)\right)=\infty$, then 
\[
P^\psi_{p^{-1}\left(\bigvee C\right)}P^\psi_{q^{-1}\left(\bigvee C\right)} =0.
\] 
Hence, Lemma~\ref{projections and psi} tells us that 
\begin{align*}
R_{C,F}^\psi\psi_p(x)\psi_q(y)^*R_{C,F}^\psi
&=R_{C,F}^\psi Q_{C,F}^\psi\psi_p(x)\psi_q(y)^*Q_{C,F}^\psi R_{C,F}^\psi\\
&=R_{C,F}^\psi Q_{C,F}^\psi\psi_p(x)P^\psi_{p^{-1}\left(\bigvee C\right)}P^\psi_{q^{-1}\left(\bigvee C\right)} \psi_q(y)^*Q_{C,F}^\psi R_{C,F}^\psi
=0.
\end{align*}
With this in mind, suppose that $p,q\in C$ and $\left(p^{-1}\left(\bigvee C\right)\right)\vee \left(q^{-1}\left(\bigvee C\right)\right)<\infty$. Since $p$ and $q$ are distinct, it follows that either $p^{-1}\left(\bigvee C\right)<\left(p^{-1}\left(\bigvee C\right)\right)\vee \left(q^{-1}\left(\bigvee C\right)\right)$ or $q^{-1}\left(\bigvee C\right)<\left(p^{-1}\left(\bigvee C\right)\right)\vee \left(q^{-1}\left(\bigvee C\right)\right)$. By taking adjoints, we may assume, without loss of generality, that $p^{-1}<\left(p^{-1}\left(\bigvee C\right)\right)\vee \left(q^{-1}\left(\bigvee C\right)\right)$. Therefore, 
\[
\beta_{p,q}=p\big(\big(p^{-1}\big(\bigvee C\big)\big)\vee \big(q^{-1}\big(\bigvee C\big)\big)\big).
\]
Consequently, an application of Lemma~\ref{projections and psi} shows that
\begin{align*}
R_{C,F}^\psi&\psi_p(x)\psi_q(y)^*R_{C,F}^\psi\\
&=R_{C,F}^\psi \big(P^\psi_{\bigvee C}-P^\psi_{\beta_{p,q}}\big)\psi_p(x)\psi_q(y)^* P^\psi_{\bigvee C}R_{C,F}^\psi\\
&=R_{C,F}^\psi\psi_p(x)\big(P^\psi_{p^{-1}\left(\bigvee C\right)}-P^\psi_{p^{-1}\beta_{p,q}}\big)P^\psi_{q^{-1}\left(\bigvee C\right)}\psi_q(y)^*R_{C,F}^\psi\\
&=R_{C,F}^\psi\psi_p(x)\big(P^\psi_{p^{-1}\left(\bigvee C\right)}-P^\psi_{\left(p^{-1}\left(\bigvee C\right)\right)\vee \left(q^{-1}\left(\bigvee C\right)\right)}\big)P^\psi_{q^{-1}\left(\bigvee C\right)}\psi_q(y)^*R_{C,F}^\psi\\
&=0. 
\end{align*}

Additionally, we claim that the representation
\[
\mathcal{L}_A\left(\mathbf{X}_{\bigvee C}\right)\ni T \mapsto R_{C,F}^\psi\rho_{\bigvee C}^\psi(T)\in \mathcal{B}(\mathcal{H})
\]
is faithful. Let 
\[
\mathcal{K}:=\hspace{-1em}\prod_{\substack{p\in F\setminus C, \,  s,t\in C,\\ p\vee \left(\bigvee C\right)<\infty, \, s\neq t,\\ \left(s^{-1}\left(\bigvee C\right)\right)\vee \left(t^{-1}\left(\bigvee C\right)\right)<\infty}}\hspace{-1em}\left(\mathrm{id}_\mathcal{H}-P_{\left(\bigvee C\right)^{-1}\left(p\vee \left(\bigvee C\right)\right)}^\psi\right)
\left(\mathrm{id}_\mathcal{H}-P^\psi_{\left(\bigvee C\right)^{-1}\beta_{s,t}}\right)\mathcal{H}.
\]
As $\psi_e(a)P_q^\psi=P_q^\psi \psi_e(a)$, for each $a\in A$ and $q\in P$, by Lemma~\ref{projections and psi}, we see that $\mathcal{K}$ is a $\psi_e$-invariant subspace of $\mathcal{H}$. Since $C$ is an initial segment of $F$, if $p\in F\setminus C$ with $p\vee\left(\bigvee C\right) <\infty$, then $p\not\leq \bigvee C$, and so $\left(\bigvee C\right)^{-1}\left(p\vee \left(\bigvee C\right)\right)\neq e$. We also claim that for any $s,t\in C$ with $s\neq t$ and $\left(s^{-1}\left(\bigvee C\right)\right)\vee \left(t^{-1}\left(\bigvee C\right)\right)<\infty$, we have $\left(\bigvee C\right)^{-1}\beta_{s,t}\neq e$. Firstly, if $s^{-1}\left(\bigvee C\right)< \left(s^{-1}\left(\bigvee C\right)\right)\vee \left(t^{-1}\left(\bigvee C\right)\right)$, then 
\begin{align*}
\big(\bigvee C\big)^{-1}\beta_{s,t}
&=\big(\bigvee C\big)^{-1}s\big(\big(s^{-1}\big(\bigvee C\big)\big)\vee \big(t^{-1}\big(\bigvee C\big)\big)\big)\\
&=\big(s^{-1}\big(\bigvee C\big)\big)\big(\big(s^{-1}\big(\bigvee C\big)\big)\vee \big(t^{-1}\big(\bigvee C\big)\big)\big)
\neq e.
\end{align*}
On the other hand, if $s^{-1}\left(\bigvee C\right)=\left(s^{-1}\left(\bigvee C\right)\right)\vee \left(t^{-1}\left(\bigvee C\right)\right)$, then
\[
\big(\bigvee C\big)^{-1}\beta_{s,t}=\big(\bigvee C\big)^{-1}ts^{-1}\big(\bigvee C\big)\neq e
\]
since $s\neq t$. Thus, $\psi_e |_\mathcal{K}$ is faithful. Hence, by Proposition~\ref{existence of rho map} it follows that the subspace $\mathcal{M}:=\overline{\psi_{\bigvee C}(\mathbf{X}_{\bigvee C})\mathcal{K}}$ is $\rho_{\bigvee C}^\psi$-invariant and $\rho_{\bigvee C}^\psi|_\mathcal{M}$ is faithful. To see that the map $\mathcal{L}_A\left(\mathbf{X}_{\bigvee C}\right)\ni T \mapsto R_{C,F}^\psi  \rho_{\bigvee C}^\psi(T)\in \mathcal{B}(\mathcal{H})$ is faithful, it remains to show that $\mathcal{M}\subseteq R_{C,F}^\psi\mathcal{H}$. Suppose $s,t\in C$ with $s\neq t$ and $\left(s^{-1}\left(\bigvee C\right)\right)\vee \left(t^{-1}\left(\bigvee C\right)\right)<\infty$. Since $\bigvee C\leq \beta_{s,t}$, Lemma~\ref{projections and psi} tells us that for any $x\in \mathbf{X}_{\bigvee C}$ we have 
\[
P_{\beta_{s,t}}\psi_{\bigvee C}(x)=\psi_{\bigvee C}(x)P_{\left(\bigvee C\right)^{-1}\beta_{s,t}}.
\]
Thus,
\begin{align*}
R_{C,F}^\psi&\psi_{\bigvee C}(x)\\
&=
Q_{C,F}^\psi\prod_{\substack{s,t\in C, \, s\neq t, \\ \left(s^{-1}\left(\bigvee C\right)\right)\vee \left(t^{-1}\left(\bigvee C\right)\right)<\infty}} \left(P^\psi_{\bigvee C}-P^\psi_{\beta_{s,t}}\right)\psi_{\bigvee C}(x)\\
&=
Q_{C,F}^\psi\psi_{\bigvee C}(x)\prod_{\substack{s,t\in C, \, s\neq t, \\ \left(s^{-1}\left(\bigvee C\right)\right)\vee \left(t^{-1}\left(\bigvee C\right)\right)<\infty}} \left(\mathrm{id}_\mathcal{H}-P^\psi_{\left(\bigvee C\right)^{-1}\beta_{s,t}}\right)\\
&=
\psi_{\bigvee C}(x)\prod_{\substack{p\in F\setminus C, \,  s,t\in C,\\ p\vee \left(\bigvee C\right) <\infty, \, s\neq t,\\ \left(s^{-1}\left(\bigvee C\right)\right)\vee \left(t^{-1}\left(\bigvee C\right)\right)<\infty}}\left(\mathrm{id}_\mathcal{H}-P_{\left(\bigvee C\right)^{-1}\left(p\vee \left(\bigvee C\right)\right)}^\psi\right)
\left(\mathrm{id}_\mathcal{H}-P^\psi_{\left(\bigvee C\right)^{-1}\beta_{s,t}}\right).
\end{align*}
Hence, $R_{C,F}^\psi$ is the identity on 
\begin{align*}
\mathcal{M}
&=
\overline{\psi_{\bigvee C}\left(\mathbf{X}_{\bigvee C}\right)\mathcal{K}}\\
&=
\overline{\psi_{\bigvee C}\left(\mathbf{X}_{\bigvee C}\right)
\hspace{-0.9em}
\prod_{\substack{p\in F\setminus C, \,  s,t\in C,\\ p\vee \left(\bigvee C\right) <\infty, \, s\neq t,\\ \left(s^{-1}\left(\bigvee C\right)\right)\vee \left(t^{-1}\left(\bigvee C\right)\right)<\infty}}
\hspace{-0.9em}
\left(\mathrm{id}_\mathcal{H}-P_{\left(\bigvee C\right)^{-1}\left(p\vee \left(\bigvee C\right)\right)}^\psi\right)
\left(\mathrm{id}_\mathcal{H}-P^\psi_{\left(\bigvee C\right)^{-1}\beta_{s,t}}\right)\mathcal{H}},
\end{align*}
and so $\mathcal{M}\subseteq R_{C,F}^\psi\mathcal{H}$. 

Putting all of this together, we see that
\begin{align*}
\bigg\|\sum_{k:p_k=q_k}\psi_{p_k}(x_k)\psi_{p_k}(y_k)^*\bigg\|_{\mathcal{B}(\mathcal{H})}
&=\bigg\|\sum_{k:p_k=q_k}\iota_{p_k}^{\bigvee C}(\Theta_{x_k,y_k})\bigg\|_{\mathcal{L}_A\left(\mathbf{X}_{\bigvee C}\right)}\\
&=\bigg\|R_{C,F}^\psi\rho_{\bigvee C}^\psi\bigg(\sum_{k:p_k=q_k}\iota_{p_k}^{\bigvee C}(\Theta_{x_k,y_k})\bigg)\bigg\|_{\mathcal{B}(\mathcal{H})}\\
&=\bigg\|R_{C,F}^\psi\sum_{k:p_k=q_k}\psi_{p_k}(x_k)\psi_{p_k}(y_k)^*\bigg\|_{\mathcal{B}(\mathcal{H})}\\
&=\bigg\|R_{C,F}^\psi\bigg(\sum_k \psi_{p_k}(x_k)\psi_{q_k}(y_k)^*\bigg)R_{C,F}^\psi\bigg\|_{\mathcal{B}(\mathcal{H})}\\
&\leq \bigg\|\sum_k \psi_{p_k}(x_k)\psi_{q_k}(y_k)^*\bigg\|_{\mathcal{B}(\mathcal{H})}.
\end{align*} 
Since the norm estimate (\ref{norm estimate onto core}) holds, the formula $\psi_p(x)\psi_q(y)^*\mapsto \delta_{p,q}\psi_p(x)\psi_q(y)^*$ extends to a map on $\psi_*(\mathcal{NT}_\mathbf{X})=\cspan\left\{\psi_p(x)\psi_q(y)^*:p,q\in P, \ x\in \mathbf{X}_p, \ y \in \mathbf{X}_q\right\}$ by linearity and continuity, which we denote by $E_\psi$. Furthermore, for any $p,q\in P$, $x\in \mathbf{X}_p$, $y \in \mathbf{X}_q$, we have
\begin{align*}
\left(E_\psi \circ \psi_*\right)\left(i_{\mathbf{X}_p}(x)i_{\mathbf{X}_q}(y)^*\right)
&=E_\psi\left(\psi_p(x)\psi_q(y)^*\right)
=\delta_{p,q}\psi_p(x)\psi_q(y)^*\\
&=\psi_*\left(\delta_{p,q} i_{\mathbf{X}_p}(x)i_{\mathbf{X}_q}(y)^*\right)
=\left(\psi_* \circ E_{\delta_\mathbf{X}}\right)\left(i_{\mathbf{X}_p}(x)i_{\mathbf{X}_q}(y)^*\right). 
\end{align*}
Since $\mathcal{NT}_\mathbf{X}=\cspan\left\{i_{\mathbf{X}_p}(x)i_{\mathbf{X}_q}(y)^*:p,q\in P, x\in \mathbf{X}_p, y \in \mathbf{X}_q\right\}$, whilst the maps $E_\psi \circ \psi_*$ and $\psi_* \circ E_{\delta_\mathbf{X}}$ are linear and norm-decreasing, we conclude that $E_\psi \circ \psi_*=\psi_* \circ E_{\delta_\mathbf{X}}$. This completes the proof of part (ii).
\end{proof}
\end{prop}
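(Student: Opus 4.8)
Both parts will follow from the norm computations in Lemma~\ref{norm of things in core}, together with the faithfulness hypothesis. For part~(i) the plan is to prove that $\psi_*$ is isometric on the dense $*$-subalgebra of finite sums $Z=\sum_k i_{\mathbf{X}_{p_k}}(x_k)i_{\mathbf{X}_{p_k}}(y_k)^*$ in $\mathcal{NT}_\mathbf{X}^{\delta_\mathbf{X}}$. I would fix a faithful representation $\sigma:\mathcal{NT}_\mathbf{X}\to\mathcal{B}(\mathcal{H}')$, so that $\sigma\circ i_\mathbf{X}$ is Nica covariant and $\norm{Z}_{\mathcal{NT}_\mathbf{X}}=\norm{\sigma(Z)}$, and a finite $F\subseteq P$ containing every $p_k$. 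Applying the first formula of Lemma~\ref{norm of things in core} to $\sigma\circ i_\mathbf{X}$ and then bounding above---using that each $Q_{C,F}^{\sigma\circ i_\mathbf{X}}$ is a projection and each $\rho_{\bigvee C}^{\sigma\circ i_\mathbf{X}}$ is norm-decreasing---yields $\norm{Z}\le\max_C\norm{\sum_k\iota_{p_k}^{\bigvee C}(\Theta_{x_k,y_k})}_{\mathcal{L}_A(\mathbf{X}_{\bigvee C})}$, the maximum running over the initial segments $C$ of $F$. By the \emph{second} formula of Lemma~\ref{norm of things in core} applied to $\psi$---which is precisely where the faithfulness hypothesis is consumed---this maximum equals $\norm{\psi_*(Z)}$. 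Since $\psi_*$ is norm-decreasing, we get $\norm{\psi_*(Z)}=\norm{Z}$, so $\psi_*$ is isometric on finite sums and hence injective on $\mathcal{NT}_\mathbf{X}^{\delta_\mathbf{X}}$.

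For part~(ii) the crux is the norm estimate
\[
\bigg\|\sum_{k:p_k=q_k}\psi_{p_k}(x_k)\psi_{p_k}(y_k)^*\bigg\|\le\bigg\|\sum_k\psi_{p_k}(x_k)\psi_{q_k}(y_k)^*\bigg\|,
\]
valid for every finite sum. Granting it, the diagonalisation $\psi_p(x)\psi_q(y)^*\mapsto\delta_{p,q}\psi_p(x)\psi_q(y)^*$ extends by linearity and continuity to a map $E_\psi$ on $\psi_*(\mathcal{NT}_\mathbf{X})=\cspan\{\psi_p(x)\psi_q(y)^*\}$, and the identity $E_\psi\circ\psi_*=\psi_*\circ E_{\delta_\mathbf{X}}$ follows by checking it on the generators $i_{\mathbf{X}_p}(x)i_{\mathbf{X}_q}(y)^*$ and extending by density.

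To establish the estimate I would fix a finite $F$ containing all $p_k,q_k$, use Lemma~\ref{norm of things in core} to pick an initial segment $C$ realising the norm of the diagonal part as $\norm{\sum_{p_k=q_k}\iota_{p_k}^{\bigvee C}(\Theta_{x_k,y_k})}$, and then sharpen $Q_{C,F}^\psi$ to a projection $R_{C,F}^\psi$ by appending, for each pair of distinct $s,t\in C$ whose translates $s^{-1}(\bigvee C),\,t^{-1}(\bigvee C)$ have a common upper bound, a factor $(P_{\bigvee C}^\psi-P_{\beta_{s,t}}^\psi)$, where $\beta_{s,t}\ge\bigvee C$ is manufactured from whichever translate lies strictly below the join. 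Two points then need checking. First, compression by $R_{C,F}^\psi$ on both sides should annihilate each off-diagonal term $\psi_p(x)\psi_q(y)^*$ with $p\ne q$: by Lemma~\ref{projections and psi} this occurs because either $Q_{C,F}^\psi\psi_p(x)=0$ (when $p\not\le\bigvee C$), or $P_{p^{-1}(\bigvee C)}^\psi P_{q^{-1}(\bigvee C)}^\psi=0$ (when the translates have no common upper bound), or the inserted $\beta_{p,q}$-factor kills the term. Second, the compression $T\mapsto R_{C,F}^\psi\rho_{\bigvee C}^\psi(T)$ should remain faithful, which I would get by building a $\psi_e$-invariant subspace $\mathcal{K}$ out of the complementary projections on which $\psi_e$ acts faithfully---verifying that every relevant $(\bigvee C)^{-1}\beta_{s,t}$ and $(\bigvee C)^{-1}(p\vee(\bigvee C))$ is distinct from $e$---and then applying Proposition~\ref{existence of rho map}(iv) to $\mathcal{M}:=\overline{\psi_{\bigvee C}(\mathbf{X}_{\bigvee C})\mathcal{K}}$, along with the inclusion $\mathcal{M}\subseteq R_{C,F}^\psi\mathcal{H}$. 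Together these identify the norm of the diagonal part with $\norm{R_{C,F}^\psi\big(\sum_k\psi_{p_k}(x_k)\psi_{q_k}(y_k)^*\big)R_{C,F}^\psi}$, which is at most the norm of the full sum.

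The main obstacle is the order-theoretic bookkeeping behind $R_{C,F}^\psi$: the definition of $\beta_{s,t}$ must guarantee that for \emph{every} off-diagonal pair some factor of $R_{C,F}^\psi$ annihilates its term, yet few enough factors are inserted that $\mathcal{K}$ stays $\psi_e$-faithful and so $T\mapsto R_{C,F}^\psi\rho_{\bigvee C}^\psi(T)$ still detects the full norm of the diagonal. Balancing ``enough'' annihilating projections against ``not too many'' is the delicate part; by contrast the surrounding continuity, projection, and density arguments are routine.
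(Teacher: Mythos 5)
Your proposal is correct and follows essentially the same route as the paper: part (i) via the two norm formulas of Lemma~\ref{norm of things in core} applied to a faithful $\sigma\circ i_\mathbf{X}$ and to $\psi$ respectively, and part (ii) via the refined projection $R_{C,F}^\psi$ built from the $\beta_{s,t}$, with the same case analysis annihilating off-diagonal terms and the same invariant-subspace argument for faithfulness of $T\mapsto R_{C,F}^\psi\rho_{\bigvee C}^\psi(T)$. The details you flag as needing verification are exactly the ones the paper carries out, and in the same way.
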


Finally, we prove the uniqueness theorem for Nica--Toeplitz algebras. We remind the reader that if $(G,P)$ is a quasi-lattice ordered group with $G$ amenable, then any compactly aligned product system $\mathbf{X}$ over $P$ is automatically amenable (in the sense that the expectation $E_{\delta_\mathbf{X}}$ is faithful on positive elements). 

\begin{proof}[Proof of Theorem~\ref{uniqueness theorem for NT algebras}]
Suppose that the representation
\begin{align*}
A\ni a \mapsto \psi_e(a)\prod_{t\in K}\big(\mathrm{id}_\mathcal{H}-P_t^\psi\big)\in \mathcal{B}(\mathcal{H})
\end{align*}
is faithful for any finite set $K\subseteq P\setminus \{e\}$. Let $b\in \mathcal{NT}_\mathbf{X}$ be such that $\psi_*(b)=0$. Thus,
\[
\psi_*(E_{\delta_\mathbf{X}}(b^*b))=E_\psi(\psi_*(b^*b))=E_\psi(\psi_*(b)^*\psi_*(b))=0.
\]
Since $\psi_*$ is faithful on $\mathcal{NT}_\mathbf{X}^{\delta_\mathbf{X}}=E_{\delta_\mathbf{X}}(\mathcal{NT}_\mathbf{X})$, we must have $E_{\delta_\mathbf{X}}(b^*b)=0$. As $E_{\delta_\mathbf{X}}$ is faithful on positive elements, we conclude that $b=0$. Hence, $\psi_*$ is faithful.

We now prove (ii). Suppose that $\psi_*$ is faithful and $\phi_p(A)\subseteq \mathcal{K}_A(\mathbf{X}_p)$ for each $p\in P$. Fix a finite set $K\subseteq P\setminus \{e\}$. For each $a\in A$, define
\[
T_a:=\sum_{J\subseteq K, \, \bigvee J<\infty}(-1)^{|J|}i_\mathbf{X}^{(\bigvee J)}\left(\phi_{\bigvee J}(a)\right)\in \mathcal{NT}_\mathbf{X}.
\]
We claim that for any Nica covariant representation $\varphi:\mathbf{X}\rightarrow \mathcal{B}(\mathcal{H}')$ we have 
\begin{equation}
\label{something about representations}
\begin{aligned}
\varphi_*(T_a)=\varphi_e(a)\prod_{t\in K}\big(\mathrm{id}_{\mathcal{H}'}-P_t^\varphi\big). 
\end{aligned}
\end{equation}
To see this, firstly observe that for any $t\in P$ and $a\in A$, we have
\[
\varphi_*\big(i_\mathbf{X}^{(t)}(\phi_t(a))\big)=\rho_t^\varphi(\phi_p(a))=\varphi_e(a)P_t^\varphi.
\]
Therefore,
\begin{align*}
\varphi_*(T_a)
&=\sum_{J\subseteq K, \, \bigvee J<\infty}(-1)^{|J|}\varphi_*\big(i_\mathbf{X}^{(\bigvee J)}\left(\phi_{\bigvee J}(a)\right)\big)\\
&=\varphi_e(a)\sum_{J\subseteq K, \, \bigvee J<\infty}(-1)^{|J|}P_{\bigvee J}^\varphi\\
&=\varphi_e(a)\sum_{J\subseteq K}(-1)^{|J|}P_{\bigvee J}^\varphi.
\end{align*}
Hence, to prove that Equation~(\ref{something about representations}) holds, it suffices to show that 
\begin{equation}
\label{realising product as sum}
\begin{aligned}
\sum_{J\subseteq K}(-1)^{|J|}P_{\bigvee J}^\varphi=\prod_{t\in K}\big(\mathrm{id}_{\mathcal{H}'}-P_t^\varphi\big). 
\end{aligned}
\end{equation}
To prove this we use induction on $|K|$. When $|K|=0$ we have
\begin{align*}
\sum_{J\subseteq K}(-1)^{|J|}P_{\bigvee J}^\varphi
=(-1)^{|\emptyset|}P_{\bigvee \emptyset}^\varphi
=P_e^\varphi
=\mathrm{id}_{\mathcal{H}'}
=\prod_{t\in \emptyset}\left(\mathrm{id}_{\mathcal{H}'}-P_t^\varphi\right)
=\prod_{t\in K}\left(\mathrm{id}_{\mathcal{H}'}-P_t^\varphi\right). 
\end{align*}
Now let $n\in \N$ and suppose we have equality whenever $K\subseteq P$ and $|K|=n$. Fix $K'\subseteq P$ with $|K'|=n+1$. Let $s\in K'$. Then
\begin{align*}
\sum_{J\subseteq K'}(-1)^{|J|}P_{\bigvee J}^\varphi
&=\sum_{J\subseteq K'\setminus \{s\}}(-1)^{|J|}P_{\bigvee J}^\varphi+\sum_{\{J\subseteq K': s\in J\}}(-1)^{|J|}P_{\bigvee J}^\varphi\\
&=\sum_{J\subseteq K'\setminus \{s\}}(-1)^{|J|}P_{\bigvee J}^\varphi+\sum_{J\subseteq K'\setminus \{s\}}(-1)^{|J\cup \{s\}|}P_{\bigvee (J\cup \{s\})}^\varphi\\
&=\sum_{J\subseteq K'\setminus \{s\}}(-1)^{|J|}P_{\bigvee J}^\varphi-\sum_{J\subseteq K'\setminus \{s\}}(-1)^{|J|}P_{\bigvee J}^\varphi P_{s}^\varphi\\
&=\left(\mathrm{id}_{\mathcal{H}'}-P_{s}^\varphi\right)\sum_{J\subseteq K'\setminus \{s\}}(-1)^{|J|}P_{\bigvee J}^\varphi\\
&=\left(\mathrm{id}_{\mathcal{H}'}-P_{s}^\varphi\right)\prod_{t\in  K'\setminus \{s\}}\left(\mathrm{id}_{\mathcal{H}'}-P_t^\varphi\right)\\
&=\prod_{t\in  K'}\left(\mathrm{id}_{\mathcal{H}'}-P_t^\varphi\right).
\end{align*}
This proves that Equation~(\ref{realising product as sum}) holds, and so Equation~(\ref{something about representations}) follows. 

Now let $\pi:A\rightarrow \mathcal{B}(\mathcal{H}')$ be a faithful nondegenerate representation of $A$. Define $\Psi:\mathbf{X}\rightarrow \mathcal{B}(\mathcal{F}_\mathbf{X}\otimes_A \mathcal{H}')$ by
\[
\Psi:=\big(\mathcal{F}_\mathbf{X}\text{-}\mathrm{Ind}_A^{\mathcal{L}_A(\mathcal{F}_\mathbf{X})}\pi\big)\circ l
\]
where $l:\mathbf{X}\rightarrow \mathcal{L}_A(\mathcal{F}_\mathbf{X})$ is the Fock representation of $\mathbf{X}$.  Since $l$ is a Nica covariant representation of $\mathbf{X}$ and $\mathcal{F}_\mathbf{X}\text{-}\mathrm{Ind}_A^{\mathcal{L}_A(\mathcal{F}_\mathbf{X})}\pi$ is a $*$-homomorphism, $\Psi$ is a Nica covariant representation of $\mathbf{X}$. We claim that the representation 
\[
A\ni a \mapsto \Psi_e(a)\prod_{t\in K}\big(\mathrm{id}_{\mathcal{F}_\mathbf{X}\otimes_A \mathcal{H}'}-P_t^\Psi\big)\in \mathcal{B}(\mathcal{F}_\mathbf{X}\otimes_A \mathcal{H}')
\]
is faithful. To see this, suppose that $a\in A\setminus\{0\}$, so that $aa^*\neq 0$. As $\pi$ is faithful, we can find $h\in \mathcal{H}'$ such that $\pi(aa^*)h\neq 0$. For any $t\in P\setminus \{e\}$ we have
\begin{align*}
\overline{\Psi_t(\mathbf{X}_t)\left(\mathcal{F}_\mathbf{X}\otimes_A \mathcal{H}'\right)}
&=\overline{\big(\mathcal{F}_\mathbf{X}\text{-}\mathrm{Ind}_A^{\mathcal{L}_A(\mathcal{F}_\mathbf{X})}\pi\left(l_t(\mathbf{X}_t)\right)\big)\left(\mathcal{F}_\mathbf{X}\otimes_A \mathcal{H}'\right)}\\
&=\overline{l_t(\mathbf{X}_t)(\mathcal{F}_\mathbf{X})\otimes_A \mathcal{H}'}\\
&=\overline{\bigoplus_{s\geq t}\mathbf{X}_{s}\otimes_A \mathcal{H}'}.
\end{align*}
Hence, it follows that $P_t^\Psi=\mathrm{proj}_{\overline{\Psi_t(\mathbf{X}t)\left(\mathcal{F}_\mathbf{X}\otimes_A \mathcal{H}'\right)}}$ is zero on $A\otimes_A \mathcal{H}'\subseteq \mathcal{F}_\mathbf{X}\otimes_A \mathcal{H}'$. Thus, as $e\not \in K$ we see that
\begin{align*}
\Psi_e(a)\prod_{t\in K}\left(\mathrm{id}_{\mathcal{F}_\mathbf{X}\otimes_A \mathcal{H}'}-P_t^\Psi\right)\left(a^*\otimes_A h\right)
=\Psi_e(a)(a^*\otimes_A h)
=l_e(a)(a^*)\otimes_A h
&=aa^*\otimes_A h,
\end{align*}
which is nonzero because
\begin{align*}
\norm{aa^*\otimes_A h}_{\mathcal{F}_\mathbf{X}\otimes_A \mathcal{H}'}^2
&=\langle aa^*\otimes_A h, aa^*\otimes_A h \rangle_\C
=\langle h, \pi((aa^*)^*aa^*)h \rangle_\C\\
&=\langle \pi(aa^*)h, \pi(aa^*)h \rangle_\C
=\norm{ \pi(aa^*)h}_{\mathcal{H}'}^2
\neq 0. 
\end{align*}
Therefore, $A\ni a \mapsto \Psi_e(a)\prod_{t\in K}\left(\mathrm{id}_{\mathcal{F}_\mathbf{X}\otimes_A \mathcal{H}'}-P_t^\Psi\right)\in \mathcal{B}(\mathcal{F}_\mathbf{X}\otimes_A \mathcal{H}')$ is faithful. 

Putting all of this together, and using that $\psi_*$ is faithful at the penultimate equality, we see that for any $a\in A$, 
\begin{align*}
\norm{a}_A
=\bigg\|\Psi_e(a)\prod_{t\in K}\big(\mathrm{id}_{\mathcal{F}_\mathbf{X}\otimes_A \mathcal{H}'}-P_t^\Psi\big)\bigg\|_{\mathcal{B}(\mathcal{F}_\mathbf{X}\otimes_A \mathcal{H}')}
&=\norm{\Psi_*(T_a)}_{\mathcal{B}(\mathcal{F}_\mathbf{X}\otimes_A \mathcal{H}')}\\
&\leq \norm{T_a}_{\mathcal{NT}_\mathbf{X}}\\
&=\norm{\psi_*(T_a)}_{\mathcal{B}(\mathcal{H})}\\
&=\bigg\|\psi_e(a)\prod_{t\in K}\big(\mathrm{id}_\mathcal{H}-P_t^\psi\big)\bigg\|_{\mathcal{B}(\mathcal{H})}.
\end{align*}
Hence, $A\ni a \mapsto \psi_e(a)\prod_{t\in K}\big(\mathrm{id}_\mathcal{H}-P_t^\psi\big)\in \mathcal{B}(\mathcal{H})$ is faithful. 
\end{proof}

In practice, we are often interested in representations of product systems in more general $C^*$-algebras, rather than on Hilbert spaces. The following corollary shows that provided the coefficient algebra acts compactly on each fibre of the product system, we can still characterise the faithfulness of the induced representation. 

\begin{cor}
\label{representations in C*-algebras}
Let $(G,P)$ be a quasi-lattice ordered group and $\mathbf{X}$ an amenable compactly aligned product system over $P$ with coefficient algebra $A$. Suppose that $A$ acts compactly on each $\mathbf{X}_p$. Let $\psi:\mathbf{X}\rightarrow B$ be a Nica covariant representation of $\mathbf{X}$ in a $C^*$-algebra $B$. Then the induced $*$-homomorphism $\psi_*:\mathcal{NT}_\mathbf{X}\rightarrow B$ is faithful if and only if for every $a\in A\setminus \{0\}$ and every finite set $K\subseteq P\setminus \{e\}$, we have
\[
\prod_{t\in K}\big(\psi_e-\psi^{(t)}\circ\phi_t\big)(a)\neq 0.
\]
\begin{proof}
Fix a faithful representation $\pi:B\rightarrow \mathcal{B}(\mathcal{H})$. Then $\pi\circ \psi:\mathbf{X}\rightarrow \mathcal{B}(\mathcal{H})$ is a Nica covariant representation with induced representation $(\pi\circ \psi)_*=\pi\circ \psi_*$. If $a\in A$ and $t\in P\setminus\{e\}$, then Proposition~\ref{existence of rho map} implies that 
\[
\pi(\psi_e(a))P_t^{\pi\circ \psi}=\rho_t^{\pi\circ \psi}(\phi_t(a))=(\pi\circ \psi)^{(t)}(\phi_t(a))=\pi\big(\psi^{(t)}(\phi_t(a))\big).
\]
Hence, for any  $a\in A$ and any finite set $K\subseteq P\setminus \{e\}$, we have that
\begin{align*}
\pi\bigg(\prod_{t\in K}\big(\psi_e-\psi^{(t)}\circ\phi_t\big)(a)\bigg)
&=\prod_{t\in K}\Big(\pi(\psi_e(a))-\pi\big(\psi^{(t)}(\phi_t(a))\big)\Big)\\
&=\prod_{t\in K}\Big(\pi(\psi_e(a))-\pi(\psi_e(a))P_t^{\pi\circ \psi}\Big)\\
&=(\pi\circ \psi)_e(a)\prod_{t\in K}\big(\mathrm{id}_\mathcal{H}-P_t^{\pi\circ \psi}\big),
\end{align*}
and so the result follows from Theorem~\ref{uniqueness theorem for NT algebras}.
\end{proof}
\end{cor}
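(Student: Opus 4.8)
The plan is to reduce the statement, which concerns a representation in an arbitrary $C^*$-algebra, to the Hilbert space case already settled in Theorem~\ref{uniqueness theorem for NT algebras}. To do this I would fix a faithful representation $\pi:B\rightarrow \mathcal{B}(\mathcal{H})$ on some Hilbert space $\mathcal{H}$. Then $\pi\circ\psi:\mathbf{X}\rightarrow \mathcal{B}(\mathcal{H})$ is again a Nica covariant representation, and by the universal property of $\mathcal{NT}_\mathbf{X}$ its induced homomorphism satisfies $(\pi\circ\psi)_*=\pi\circ\psi_*$. Because $\pi$ is isometric, $\psi_*$ is faithful if and only if $(\pi\circ\psi)_*$ is faithful, so it suffices to analyse the latter. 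As $\mathbf{X}$ is assumed amenable, both directions of Theorem~\ref{uniqueness theorem for NT algebras} are available for the representation $\pi\circ\psi$.

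The heart of the argument is to recast the Hilbert space criterion in a form intrinsic to $B$. Concretely, I would prove the identity
\[
\pi\bigg(\prod_{t\in K}\big(\psi_e-\psi^{(t)}\circ\phi_t\big)(a)\bigg)=(\pi\circ\psi)_e(a)\prod_{t\in K}\big(\mathrm{id}_\mathcal{H}-P_t^{\pi\circ\psi}\big)
\]
for each $a\in A$ and each finite $K\subseteq P\setminus\{e\}$. The starting point is Proposition~\ref{existence of rho map}(iii), which gives $\pi(\psi_e(a))P_t^{\pi\circ\psi}=\rho_t^{\pi\circ\psi}(\phi_t(a))$ for $t\in P\setminus\{e\}$. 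This is exactly where the compactness hypothesis enters: since $A$ acts compactly, $\phi_t(a)\in\mathcal{K}_A(\mathbf{X}_t)$, so Proposition~\ref{existence of rho map}(ii) allows me to replace $\rho_t^{\pi\circ\psi}(\phi_t(a))$ by $(\pi\circ\psi)^{(t)}(\phi_t(a))=\pi\big(\psi^{(t)}(\phi_t(a))\big)$. Expanding the product over $K$ and using that $\pi$ is a $*$-homomorphism then yields the displayed identity.

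Finally I would combine these ingredients. Since $\pi$ is injective, the left-hand side of the identity is nonzero precisely when $\prod_{t\in K}(\psi_e-\psi^{(t)}\circ\phi_t)(a)\neq 0$, while the right-hand side is exactly the operator whose faithfulness (as $a$ and $K$ range over all admissible choices) Theorem~\ref{uniqueness theorem for NT algebras} equates with faithfulness of $(\pi\circ\psi)_*$. Reading off both implications of that theorem, together with the isometry of $\pi$, delivers the asserted equivalence. The main obstacle, and the only place the hypotheses really bite, is the compactness reduction: it is precisely compactness of the left action that lets one rewrite the spectral projections $P_t^{\pi\circ\psi}$ in terms of the homomorphisms $\psi^{(t)}$ evaluated on $\phi_t(a)$, thereby turning the Hilbert space condition into a statement formulated entirely inside $B$; without it, $\rho_t$ need not agree with $\psi^{(t)}$ and the criterion could not be so expressed.
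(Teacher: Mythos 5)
Your proposal is correct and follows essentially the same route as the paper: fix a faithful $\pi:B\rightarrow\mathcal{B}(\mathcal{H})$, use Proposition~\ref{existence of rho map} (with compactness of the left action to pass from $\rho_t^{\pi\circ\psi}$ to $(\pi\circ\psi)^{(t)}$) to rewrite $(\pi\circ\psi)_e(a)\prod_{t\in K}\big(\mathrm{id}_\mathcal{H}-P_t^{\pi\circ\psi}\big)$ as $\pi$ applied to an element of $B$, and then invoke Theorem~\ref{uniqueness theorem for NT algebras} together with injectivity of $\pi$. Your identification of where the compactness hypothesis is essential matches the paper's reasoning exactly.
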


\section{Acknowledgements}
The results in this article are from my PhD thesis. Thank you to my supervisors Adam Rennie and Aidan Sims at the University of Wollongong for their advice and encouragement during my PhD and during the writing of this article.

\printbibliography

\end{document}